\newcommand{\RR}{\mathbb R}
\newcommand{\PP}{\mathbb P}
\newcommand{\CC}{\mathbb C}
\newcommand{\mcL}{\mathcal L}
\newtheorem{thm}{Theorem}[section]
\newtheorem{cor}{Corollary}[section]
\newtheorem{prop}{Proposition}[section]
\newtheorem{lem}{Lemma}[section]
\newtheorem{defin}{Definition}[section]
\newtheorem{exmple}{Example}[section]
\newtheorem{rem}{Remark}[section]
\newtheorem{qz}{Question}[section]
\newenvironment{example}{\begin{exmple}\rm }{\end{exmple}}
\newenvironment{remark}{\begin{rem}\rm }{\end{rem}}
\begin{document}

\title{Two-graphs and the embedded topology of smooth quartics and its bitangent lines}
\author{Shinzo Bannai and Momoko Yamamoto-Ohno}
\date{\today}

\maketitle

\section{Introduction}

Smooth plane quartics and their bitangent lines have attracted the interest of many mathematicians from various points of views. In this paper, we study arrangements consisting of a smooth quartic and its bitangent lines with regards to their embedded topology in the context of Zariski pairs, the concept of which was first introduced in \cite{artal}.

Let $C_1, C_2\subset\PP^2$ be plane curves. Then $C_1$ and $C_2$ are said to be a Zariski pair if it satisfies the following two conditions:
\begin{enumerate}
\item There exist tubular neighborhoods $T(C_i)$ of $C_i$ $(i=1,2)$ such that the pairs $(T(C_1),C_1)$ and $(T(C_2),C_2)$ are homeomorphic as pairs.
\item The pairs $(\PP^2, C_1)$ and $(\PP^2, C_2)$ are not homeomorphic as pairs.
\end{enumerate}

The notion of a Zariski pair can be generalized to any number of curves $C_1,\ldots, C_r$, where we require any two curves to be Zariski pairs. The first condition above is equivalent to saying the $C_1$ and $C_2$ have the same combinatorial type which can be calculated algebraically. Thus a Zariski pair is a pair of plane curves having similar algebraic properties however with subtle differences which contribute to the difference in the embedded topology. Hence, the key in studying Zariski pairs is to find a suitable method to detect the subtle differences. Many invariants have been used to this end such as the fundamental groups  of the complements $\pi(\PP^2\setminus C_i)$, the Alexander polynomials $\Delta_{C_i}(t)$ and the existence/non-existence of certain Galois covers branched along $C_i$ (See \cite{act} for a survey on these topics). Also, other more recent types of invariants called \lq\lq splitting invariants" have been developed and been proved effective in studying reducible curves (\cite{artal-tokunaga}, \cite{bannai2016}, \cite{shirane16}, \cite{shirane17}). However, as the number of irreducible components increase, these invariants become more increasingly complex, and it becomes difficult to manage the data they provide. One approach in resolving this problem is to consider all sub-arrangements of the curves as done in \cite{bannai-tokunaga}, \cite{bgst}.  In this paper, we take another step further in this direction and apply the comcept of  two-graphs and switching classes from graph theory (see Section \ref{sec:two-graphs} for definitions) in order to clarify the data of the splitting invariants, which enables us to find a new Zariski 5-ple of degree 9 and a 9-ple of degree 10.

Let $Q$ be a smooth plane quartic. It is well known that a smooth plane quartic has 28 bitangent lines $L_1,\ldots, L_{28}$. For a subset $I\subset\{1,\ldots,28\}$, let
\[
Q+L_I=Q+\sum_{i\in I}L_i .
\]
Considering the curves of the above form, the case where $|I|=3$ has been studied by E. Artal-Bartolo and J. Vall\`es where they found a Zariski pair, about which the authors were informed via private communication. The case where $|I|=4$ has been treated by the authors together with H. Tokunaga where a Zariski triple was found (\cite{ban-yam-tok2018}). In this paper we consider the cases where $|I|\geq 5$.

For each $I$, we can associate a two-graph or equivalently a switching class $\mcL_I$ to $Q+L_I$ as will be explained in Section \ref{sec:oursetting}.  Our first main result is the following.
\begin{thm}\label{thm:main1}
Let $I_1, I_2\subset \{1,\ldots, 28\}$ and let $\mcL_{I_1}$, $\mcL_{I_2}$ be their associated two-graphs. If $\mcL_{I_1}$ and $\mcL_{I_2}$ are not equivalent as two-graphs, then $(\PP^2, Q+L_{I_1})$ and $(\PP^2, Q+L_{I_2})$ are not homeomorphic as pairs. If furthermore $Q+L_{I_1}$ and $Q+L_{I_2}$ have the same combinatorial type, then they form a Zariski pair.
\end{thm}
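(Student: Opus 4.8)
The plan is to prove the contrapositive of the first assertion, namely that any homeomorphism of pairs $h\colon(\PP^2,Q+L_{I_1})\to(\PP^2,Q+L_{I_2})$ forces $\mcL_{I_1}$ and $\mcL_{I_2}$ to be equivalent as two-graphs; the Zariski-pair conclusion then follows formally. The guiding principle is that $\mcL_I$, although built in Section~\ref{sec:oursetting} from the algebraic geometry of the bitangents, is in fact recoverable from the topology of the pair alone. First I would record how $h$ interacts with the irreducible decomposition of $Q+L_I$. As a subspace of $\PP^2$ the smooth quartic $Q$ is a closed surface of genus $3$, while each bitangent $L_i$ is a $2$-sphere; since these are not homeomorphic and the singular (intersection) points are topologically distinguished, $h$ must restrict to a homeomorphism $Q\to Q$ and permute the lines, inducing a bijection $I_1\to I_2$. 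In particular $h$ restricts to a self-homeomorphism of the pair $(\PP^2,Q)$.

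The next step is to lift $h$ to the relevant double cover. Let $\phi_i\colon S_i\to\PP^2$ denote the double cover branched along $Q$; topologically this is classified by a surjection $\pi_1(\PP^2\setminus Q)\to\ZZ/2$, and because $\pi_1(\PP^2\setminus Q)$ is cyclic with $H_1\cong\ZZ/4$ admitting a unique index-$2$ subgroup, the covering datum is canonical. Hence the restriction of $h$ to $\PP^2\setminus Q$ preserves this datum and lifts to a homeomorphism $\tilde h\colon S_1\to S_2$ that commutes with the branched-cover involutions and carries $\phi_1^{-1}(L_{I_1})$ onto $\phi_2^{-1}(L_{I_2})$.

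The heart of the argument, and the step I expect to be the main obstacle, is to match this topological lift with the combinatorics of the two-graph. Each bitangent splits on the cover as $\phi_i^{-1}(L_j)=L_j^{+}+L_j^{-}$, and $\mcL_I$ is assembled from the pairwise linking data of these split components on $S_i$; the only ambiguity in the construction is which sheet over $L_j$ is labelled $+$, and reversing this choice at a single line flips the pairwise datum $\epsilon_{jk}$ for every other line $k$, i.e.\ performs a \emph{switching} at the corresponding vertex. I would verify that $\tilde h$ sends the split components of the first arrangement to those of the second while preserving the relevant intersection/linking numbers, so that it transports one graph to the other up to this sheet-relabelling. Interpreting the relabelling as switching, the two graphs lie in a common switching class, so $\mcL_{I_1}$ and $\mcL_{I_2}$ are equivalent two-graphs. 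The delicate points are precisely that the linking datum is a genuinely topological quantity and that the labelling freedom corresponds to switching and to nothing more; this is where the explicit construction of Section~\ref{sec:oursetting} must be invoked in full.

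Finally, the second assertion is formal. If $Q+L_{I_1}$ and $Q+L_{I_2}$ have the same combinatorial type, then suitable tubular neighborhoods $(T(Q+L_{I_1}),Q+L_{I_1})$ and $(T(Q+L_{I_2}),Q+L_{I_2})$ are homeomorphic as pairs, establishing the first condition in the definition of a Zariski pair. The inequivalence of $\mcL_{I_1}$ and $\mcL_{I_2}$, together with the first assertion just proved, shows that $(\PP^2,Q+L_{I_1})$ and $(\PP^2,Q+L_{I_2})$ are not homeomorphic as pairs, which is the second condition. Hence they form a Zariski pair.
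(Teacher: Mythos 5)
Your proposal is correct, and its overall strategy coincides with the paper's: both argue the contrapositive (the paper's Proposition~\ref{prop:homeo2equiv}), both begin by observing that a homeomorphism of pairs must satisfy $h(Q)=Q$ (genus $3$ versus spheres) and therefore permutes the bitangents, and both then show the induced bijection preserves the triple set $\Delta_I$; the Zariski-pair conclusion is handled formally in both. The difference lies in how the preservation of $\Delta_I$ is established. The paper never lifts the homeomorphism: it converts membership in $\Delta_I$ into the condition $c_\phi(L_i+L_j+L_k)=2$ via Proposition~\ref{prop:CtoC}, and then quotes the topological invariance of connected numbers under homeomorphisms of pairs from \cite[Proposition 2.1]{ban-yam-tok2018}, so a single citation carries all of the topology. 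You instead unpack that citation: you lift $h$ to the canonical double cover branched along $Q$ (legitimate, since $\pi_1(\PP^2\setminus Q)\cong\ZZ/4$ has a unique index-two subgroup, and the lift over the complement extends across the branch locus), and you track incidences of the split components $L_j^{\pm}$, noting that the sheet-labelling ambiguity is precisely vertex switching. This buys a more self-contained argument and a transparently topological interpretation of the switching class, which in the paper arises only algebraically through the $E_7^{\ast}$ construction of Section~\ref{sec:oursetting}; the cost is that the two points you flag yourself must genuinely be supplied, namely the extension of the lift over the branch locus and the identification of the conic-defined $\Delta_I$ with the incidence data on the cover --- the latter is exactly Proposition~\ref{prop:CtoC} combined with the definition of connected numbers, so it is available in the paper and is not a gap in substance.
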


\begin{remark}
Two-graphs are well suited for describing the geometric conditions involved where as switching classes are more suited in executing the actual computations.
\end{remark}

The number $t_n$ of equivalence classes of two-graphs with  $n$ vertices is known (\cite{mallows-sloane}). The values for small $n$ are given  in the following table: 
\begin{center}
\begin{tabular}{|c|c|c|c|c|c|c|c|c|c|c|c|}
\hline
$n$ & 1 & 2 & 3 & 4 & 5 & 6 & 7 & 8 & 9 & 10 & $\cdots$ \\
\hline
$t_n$& 1 & 1 & 2 & 3 & 7 & 16 & 54 & 243 & 2038 & 33120 & $\cdots$ \\
\hline
\end{tabular}
\end{center}

A list of representatives for the above equivalence classes for $n\leq 7$ can be found in \cite{vanlint-seidel}.
The value $t_3=2$ corresponds to the Zariski pair found by E. Artal-Bartolo and J. Vall\`es. The value $t_4=3$ corresponds to the Zariski triple found by the authors and H. Tokunaga. 
However, for $n\geq 5$, there are some two-graphs that do not appear in our situation, hence we need to check which equivalence classes actually appear. We were able to check the cases for $n=5,6$ by hand, which gives our second main result.

\begin{thm}\label{thm:main2} Under the notation given above, the numbers of equivalence classes of two-graphs obtained from curves of the form $Q+L_I$ when $|I|=5,6$ are $5, 9$, respectively. Thus we have the following.
\begin{enumerate}
\item There exists a Zariski 5-ple of curves of the form $Q+L_I$ with $|I|=5$.
\item There exists a Zariski 9-ple of curves of the form $Q+L_I$ with $|I|=6$.
\end{enumerate}
\end{thm}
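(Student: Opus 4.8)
The plan is to reduce the statement to a finite combinatorial enumeration and then invoke Theorem \ref{thm:main1}. First I would recall from Section \ref{sec:oursetting} that the two-graph $\mcL_I$ has as its coherent triples exactly the \emph{syzygetic} triples among the bitangents indexed by $I$, where three bitangents are syzygetic when they extend to a syzygetic tetrad (four bitangents whose eight tangency points lie on a conic), equivalently when the sum of the three associated odd theta characteristics is again odd. The crucial point is that this syzygetic structure is \emph{the same} for every smooth quartic: the $28$ bitangents correspond to the $28$ odd theta characteristics of $Q$, the syzygetic relation is expressed purely through the $\FF_2$-quadratic form on the $6$-dimensional space of $2$-torsion, and the monodromy of the universal family acts through $\mathrm{Sp}_6(\FF_2)$ preserving this form. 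Hence the induced two-graph depends only on the chosen subset, not on $Q$, and we may fix one generic quartic once and for all.

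Next I would pass to the classical \emph{duad model}: label the $28$ bitangents by the $28$ two-element subsets (duads) of an $8$-element set, so that a subset $I$ becomes the edge set of a simple graph $G_I$ on at most $8$ vertices. Under this identification one checks that three bitangents are syzygetic precisely when their three edges form either a path with three edges or a matching of three pairwise disjoint edges (equivalently, the symmetric difference of the three duads has cardinality $2$ or $6$). Thus $\mcL_I$ is completely determined by the isomorphism type of $G_I$, and the realizable two-graphs on $n$ vertices are exactly the syzygetic two-graphs of simple graphs with $n$ edges embeddable in $8$ vertices.

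The enumeration is then a finite, hand-checkable task: list the simple graphs with $n$ edges on at most $8$ vertices up to isomorphism, compute for each the induced two-graph on its edge set, and match the result against the known list of $t_n$ two-graphs ($t_5=7$, $t_6=16$) from \cite{vanlint-seidel}. One then verifies that for $n=5$ exactly $5$ of the $7$ classes occur, and for $n=6$ exactly $9$ of the $16$. The obstruction that cuts the count down is the $8$-vertex bound: for instance the complete two-graph (every triple coherent) would require $n$ pairwise disjoint edges, i.e.\ $2n$ vertices, which is impossible for $n\ge 5$; the remaining non-realizable classes are ruled out by the same kind of vertex-count and incidence argument.

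Finally, to upgrade the counts to Zariski tuples I would fix a generic smooth quartic $Q$ whose bitangents are in general position (no three concurrent, no two meeting on $Q$) and, for each realizable two-graph class, select a representative subset $I$ realizing it in the duad model. For fixed $|I|$ all the resulting curves $Q+L_I$ then share the same combinatorial type --- a quartic with $2|I|$ tacnodes together with $\binom{|I|}{2}$ simple nodes from the line--line intersections --- while their two-graphs are pairwise inequivalent. Theorem \ref{thm:main1} then yields that the $5$ (resp.\ $9$) curves are pairwise non-homeomorphic, giving the asserted Zariski $5$-ple and $9$-ple. I expect the main obstacle to be the enumeration itself: one must both exhibit a realizing graph for each of the $5$ (resp.\ $9$) occurring classes and prove that the remaining $2$ (resp.\ $7$) classes cannot be realized within $8$ vertices, and one must confirm that general-position representatives exist so that the combinatorial types genuinely coincide.
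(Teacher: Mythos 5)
Your framework is, in substance, the paper's own framework in different clothing: the duad model you describe is exactly the paper's realization of the $28$ bitangents as the minimal vector pairs $\pm u_{jk}$ of $E_7^\ast$ indexed by duads $\{j,k\}\subset\{1,\ldots,8\}$, and your coherence criterion (three duads are syzygetic iff they form a $3$-edge path or a $3$-matching) is precisely what Propositions \ref{prop:CtoC} and \ref{prop:StoC} yield once one fixes the all-positive representatives $u_{jk}$ and computes inner products ($-\tfrac12$ iff the duads are disjoint). Realizability of the $5$ (resp.\ $9$) occurring classes is likewise settled in the paper by writing down explicit sets of minimal vectors, i.e.\ explicit duad sets, so on that half the two arguments coincide; your monodromy/$\mathrm{Sp}_6(\FF_2)$ remark is harmless but unnecessary, since one may simply fix a general $Q$ and invoke Proposition \ref{prop:combinat} for the equality of combinatorics, as you also do.

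The genuine gap is in the exclusion step, which is the mathematical heart of the theorem. Your only concrete exclusion argument --- that a complete two-graph (every triple coherent) ``would require $n$ pairwise disjoint edges'' --- is unjustified, and it is false for $n=4$: the $4$-cycle $\{1,2\},\{2,3\},\{3,4\},\{1,4\}$ realizes the complete two-graph on $4$ vertices, since every triple of its edges is a $3$-path (indeed this class does occur, in the Zariski triple of \cite{ban-yam-tok2018}). For $n\ge 5$ the conclusion happens to be true, but it needs an incidence argument (if two edges share a vertex, every further edge must complete them to a path, and one checks at most four edges can be pairwise compatible in this way) which you do not supply. Moreover, this disposes only of $(5,10)$; you must still exclude $(5,7)$ and, for $n=6$, the seven classes $(6,10)_2$, $(6,10)_3$, $(6,12)_1$, $(6,12)_2$, $(6,14)$, $(6,16)$, $(6,20)$, all of which you defer to ``the same kind of vertex-count and incidence argument.'' The idea your proposal is missing is the paper's Lemma \ref{lem:uniquetetrad}: every syzygetic triad lies in a \emph{unique} syzygetic tetrad (uniqueness of the conic through the six contact points). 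This immediately forces $|\Delta_I|\le 6$ when $|I|=5$ (Lemma \ref{lem:d_leq_6}), killing $(5,7)$ and $(5,10)$ simultaneously because each contains two coherent tetrads through a common triad; the $n=6$ exclusions then follow formally by passing to induced sub-two-graphs (Corollary \ref{cor:d_leq_6}) and the averaging argument of Lemma \ref{lem:reduction} (Corollary \ref{cor:d<14}). Your enumeration plan could be completed purely inside the duad model, but as written the decisive step is absent and the one argument offered in its place does not hold up.
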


For cases where $n\geq 7$, the discrepancy between the number of equivalence classes of all two-graphs and the number of two-graphs that actually appear in our situation become large, hence a deeper analysis is needed, which we hope to accomplish in the near future.

The organization of this paper is as follows. In Section \ref{sec:two-graphs} we state the definition and basic properties of two-graphs and switching classes. In Section \ref{sec:oursetting} we explain how to associate a two-graph to $Q+L_I$. In Section \ref{sec:proof} we prove Theorem \ref{thm:main1} and Theorem \ref{thm:main2}. 

The authors would like to express their gratitude  to  Professor E. Bannai for informing them about the concept of two-graphs and switching classes, which are the key ingredients in writing this paper. The first author is partially supported by Grant-in-Aid for Scientific Research C (18K03263).

\section{Two-graphs and switching classes}\label{sec:two-graphs}

In this section, we introduce two-graphs and switching classes which will be used in the proof Theorem \ref{thm:main1}. As for details, we refer to \cite{seidel1973}. Let $\Omega=\{\omega_1, \ldots, \omega_n\}$ be a finite set, and let $\Omega^{(3)}$ be the set of all $3$-subsets of $\Omega$. 
\begin{defin}\label{def:two-graph}
A two-graph $(\Omega , \Delta )$ is a pair of a vertex set $\Omega$ and a triple set $\Delta \subset \Omega^{(3)}$, such that each $4$-subset of $\Omega$ contains an even number of triples in $\Delta$.
\end{defin}

Sometimes, we omit $\Delta$ and denote $(\Omega, \Delta)$ simply by $\Omega$ when  $\Delta$ is clear from the context.

\begin{defin}\label{def:equiv-two-graph}
Two two-graphs $(\Omega_1 , \Delta_1)$ and $(\Omega_2 , \Delta_2)$ are said to be equivalent if there exists a bijection $\psi: \Omega_1\rightarrow \Omega_2$ such that $\{\omega_i, \omega_j, \omega_k\}\in \Delta_1$ if and only if $\{\psi(\omega_i), \psi(\omega_j), \psi(\omega_k)\}\in \Delta_2$. 
\end{defin}

\begin{defin}\label{def:sub-two-graph}
An induced sub-two-graph $( \Omega^{\prime} , \Delta^{\prime} )$ of $(\Omega, \Delta)$  is a two-graph  which satisfies $\Omega^{\prime} \subset \Omega$ and $\Delta^{\prime} = \Omega^{ (3)} \cap \Delta$. 
\end{defin}

In order to handle two-graphs easily, we consider switching equivalence on simple graphs. Let $(V , E)$ be a finite simple graph defined by its labeled vertex set $V=\{v_1, \ldots, v_n\}$ and its edge set $E$, where $|V|=n$ for fixed $n$. As is well known, the graph $(V, E)$ can be described by its adjacency matrix, which has the elements zero on the diagonal, $-1$ and $+1$ elsewhere according as the corresponding vertices are adjacent and non-adjacent, respectively. Then, as in \cite{vanlint-seidel}, switching-equivalence is defined as follows: 
\begin{defin}
The graphs $(V, E)$ and $(V^{\prime}, E^{\prime})$ are switching-equivalent if the adjacency matrix $A$ of $(V, E)$ changes to the adjacency matrix $A^{\prime}$ of $(V^{\prime}, E^{\prime})$ by the following two operations:
\begin{itemize}
\item Multiply by $-1$ any row and the corresponding column.
\item Interchange of two rows and, simultaneously, of the corresponding columns.
\end{itemize}
\end{defin}

\begin{thm}[\cite{seidel1973}, Theorem $3. 2$]
Switching-equivalence is an equivalence relation.
\end{thm}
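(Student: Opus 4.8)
The plan is to reinterpret the two elementary switching operations as conjugations by two distinguished families of matrices, after which the three axioms of an equivalence relation reduce to elementary facts about matrix multiplication. First I would observe that multiplying row $i$ and the corresponding column $i$ of the adjacency matrix $A$ by $-1$ is exactly the map $A \mapsto D A D$, where $D$ is the diagonal signature matrix having $-1$ in the $(i,i)$ entry and $+1$ elsewhere: since $(DAD)_{jk} = d_j A_{jk} d_k$ and $d_i^2 = 1$, this negates the off-diagonal entries of row and column $i$ while leaving the (zero) diagonal untouched. Likewise, interchanging rows $i,j$ together with the corresponding columns is the conjugation $A \mapsto P A P^{-1}$ by the transposition permutation matrix $P$; as $P$ is orthogonal, $P^{-1} = P^{T}$, so this is simultaneously a congruence.

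Next I would record that both $D$ and $P$ are signed permutation matrices, that is, monomial matrices whose unique nonzero entry in each row and column equals $\pm 1$, and that such matrices form a group $G$ under multiplication, generated precisely by the signature matrices and the permutation matrices. Every $M \in G$ is orthogonal, so $M^{-1} = M^{T}$ and $M A M^{-1} = M A M^{T}$. Consequently any finite sequence of the two elementary operations applied to $A$ produces $M A M^{-1}$ for some $M \in G$, and every such $M$ arises this way; switching-equivalence is therefore precisely the relation
\[
A \sim A' \iff A' = M A M^{-1} \text{ for some } M \in G .
\]
The one point requiring a line of verification is that each output is again the adjacency matrix of a simple graph on the same vertex set: conjugation by $M \in G$ preserves symmetry (from $M A M^{T}$ with $A$ symmetric), keeps the diagonal zero, and only permutes and sign-flips the off-diagonal $\pm 1$ entries, so $A'$ is indeed of the required shape.

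With this description the three properties are immediate from the group axioms. Reflexivity holds because $I \in G$ and $A = I A I^{-1}$. Symmetry is closure under inverses: if $A' = M A M^{-1}$ then $A = M^{-1} A' (M^{-1})^{-1}$ with $M^{-1} \in G$. Transitivity is closure under products: if $A' = M_1 A M_1^{-1}$ and $A'' = M_2 A' M_2^{-1}$, then $A'' = (M_2 M_1) A (M_2 M_1)^{-1}$ with $M_2 M_1 \in G$. I do not expect a genuine obstacle, since the entire content lies in recognizing the operations as conjugations by a group of matrices. An alternative, even more elementary route avoids the group language altogether by noting that each of the two operations is an involution: reflexivity is then the empty sequence of operations, symmetry is obtained by reversing a sequence, and transitivity by concatenating two sequences.
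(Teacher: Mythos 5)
Your proof is correct, but note that the paper itself offers no argument for this statement: it is imported verbatim, with citation, from Seidel's survey (Theorem 3.2 of \cite{seidel1973}), so there is no in-paper proof to measure against. Your route is a clean and complete one: realizing the first operation as the congruence $A \mapsto DAD$ by an elementary signature matrix and the second as conjugation $A \mapsto PAP^{-1}$ by a transposition matrix, observing that these generate the group $G$ of signed permutation matrices, and identifying switching-equivalence with conjugacy under $G$; reflexivity, symmetry and transitivity then follow from the group axioms, and you rightly verify closure (the conjugate is again a Seidel adjacency matrix: symmetric, zero diagonal, $\pm 1$ off the diagonal). For symmetry, the one step that deserves the emphasis you give it is that conjugation by $M^{-1}$ is itself realizable by a finite sequence of the two elementary operations --- which holds because $M^{-1}$ again lies in the group generated by the elementary matrices, or, as in your closing remark, because each elementary operation is an involution so any sequence can be reversed. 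For comparison, Seidel's own argument is the same idea in different clothing: he phrases switching with respect to a vertex subset $X$ (negating all adjacencies between $X$ and its complement), and the key fact is that these switchings compose as $\sigma_X \sigma_Y = \sigma_{X \triangle Y}$, forming an elementary abelian $2$-group, which is then combined with vertex permutations; your formulation packages the sign group and the permutation group together into the single group of monomial $\pm 1$ matrices, at the cost of fixing a vertex labeling, which the permutation part of $G$ then absorbs.
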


The equivalence classes with respect to switching equivalence will be called {\it switching classes}.

\begin{rem}
The name \lq\lq switching" comes from the fact that the former one of the two operations corresponds to \lq\lq switching"  the existence/non-existence of the edge of the corresponding graph.
\end{rem}

When we consider  switching-equivalence directly  in terms of the graph, the following Lemma holds:

\begin{lem}[\cite{seidel1973}, Lemma 3.9]
Let $(V, E)$ and $(V^{\prime}, E^{\prime})$ be finite simple graphs. Then, the graphs $(V , E)$ and $(V^{\prime}, E^{\prime})$ are switching-equivalent iff there exists a bijection $\psi: V \rightarrow V^{\prime}$ such that  the parity of the number of edges among $\{v_i, v_j, v_k\}$ and $\{\psi(v_i), \psi(v_j), \psi(v_k)\}$ are the same for all  $\{v_i, v_j, v_k\}\in V^{(3)}$.
\end{lem}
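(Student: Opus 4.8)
The plan is to prove the two implications separately, reading the first switching operation (negating a row together with its column) as \emph{switching at a vertex}---which toggles the adjacency of that vertex with every other vertex---and the second operation (simultaneous transposition of a pair of rows and columns) as a \emph{relabeling}. The pivotal fact, used in both directions, is that the parity of the number of edges inside a triple is left invariant by a single switch.

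For the \lq\lq only if" direction it suffices, since switching-equivalence is generated by these two operations, to verify the invariance for one switch at a vertex $v$ and for one relabeling. Fix a triple $T=\{v_i,v_j,v_k\}$. If $v\notin T$, the three pairs inside $T$ are untouched. If $v\in T$, say $v=v_i$, then switching at $v$ toggles exactly the two pairs $v_iv_j$ and $v_iv_k$ and fixes $v_jv_k$; toggling two pairs alters the edge-count of $T$ by $0$ or $\pm2$, hence preserves its parity. A relabeling merely permutes the triples. Composing the switches and relabelings that witness the switching-equivalence therefore yields a bijection $\psi\colon V\to V'$ carrying each triple to one of equal edge-parity, as required.

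For the converse I would first compose with the relabeling supplied by $\psi$, so that $V=V'$, $\psi=\id$, and the hypothesis reads: for every triple $T$ the numbers of edges of $E$ and of $E'$ inside $T$ have the same parity. Introduce the \emph{difference graph} $D$ on $V$ whose edges are the pairs on which $E$ and $E'$ disagree, and let $f(v_i,v_j)\in\{0,1\}$ indicate membership in $D$. Counting the agreeing and disagreeing pairs of $T$ shows the parity hypothesis is exactly the cocycle condition $f(v_i,v_j)+f(v_j,v_k)+f(v_i,v_k)\equiv 0 \pmod 2$ for all triples.

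The heart of the argument, and the step I expect to be the main obstacle, is to solve this cocycle condition and translate its solution into an explicit sequence of switches. Fixing a base vertex $v_1$, putting $g(v_1):=0$ and $g(v_i):=f(v_1,v_i)$, the relation on $\{v_1,v_i,v_j\}$ forces $f(v_i,v_j)\equiv g(v_i)+g(v_j)\pmod2$, so $D$ is precisely the set of pairs crossing between $S:=\{v_i: g(v_i)=1\}$ and its complement. Switching at each vertex of $S$ in turn toggles every pair once for each endpoint lying in $S$: pairs internal to $S$ or to $V\setminus S$ are toggled twice, with no net effect, while crossing pairs are toggled once. This sequence of switches toggles exactly the edges of $D$ and hence carries $E$ to $E'$, establishing the switching-equivalence. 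The invariance in the first direction is routine; the real difficulty is recognizing the parity hypothesis as a coboundary and reading off from it the switching set $S$.
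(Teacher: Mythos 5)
Your proof is correct. Note, however, that the paper itself offers no proof of this statement: it is quoted as Lemma 3.9 of \cite{seidel1973} and used as a black box, so there is no in-paper argument to compare against; what you have written is essentially the standard proof (and in substance Seidel's own). Both halves of your argument check out. The forward direction correctly reduces to the two generators of switching-equivalence: a switch at a vertex $v$ toggles exactly two of the three pairs of any triple containing $v$ and none of the pairs of a triple avoiding $v$, so triple parities are invariant, and permutations merely relabel. For the converse, your identification of the hypothesis with the mod-$2$ relation $f(v_i,v_j)+f(v_j,v_k)+f(v_i,v_k)\equiv 0$ for the disagreement indicator $f$ is exact, and the coboundary trick --- setting $g(v_i)=f(v_1,v_i)$ for a base vertex $v_1$, deducing $f(v_i,v_j)\equiv g(v_i)+g(v_j)$ from the triples through $v_1$, and then switching at every vertex of $S=\{v_i : g(v_i)=1\}$ --- does toggle precisely the cut edges between $S$ and its complement, since pairs inside $S$ are toggled twice and pairs outside not at all. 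The degenerate cases $|V|\leq 2$, where $V^{(3)}$ is empty, are consistent with this (any two graphs on at most two vertices are switching-equivalent), so nothing is lost there. In the context of the paper your argument also makes transparent why switching classes and two-graphs carry the same information, which is exactly the role this lemma plays in Section 2.
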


For any graph $(V, E)$ with four vertices, it can be easily checked that the number of triples $\{v_i, v_j, v_k\}\subset V$ that have an odd number of edges among them is even. Therefore we can associate a two-graph  $(\Omega_V, \Delta_V)$ to a switching class $(V, E)$ by setting $\Omega_V=V$ and \[\Delta_V=\left\{\{v_i, v_j, v_k\}\in V^{(3)} \mid \text{ $\{v_i, v_j, v_k\}$ has an odd number of edges } \right\} . \]

In fact, this correspondence gives a bijection between switching classes and two-graphs as in the following theorem.

\begin{thm}[\cite{seidel1973}, Theorem $4. 2$]
Given $n$, there is a one-to-one correspondence between  the two-graph structures and the switching classes of graphs on the set of $n$ elements.
\end{thm}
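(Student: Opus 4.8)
The plan is to prove that the assignment $(V,E)\mapsto(\Omega_V,\Delta_V)$ introduced above descends to a bijection between switching classes of graphs on $n$ vertices and equivalence classes of two-graphs on $n$ vertices. First I would record that this assignment really does land among two-graphs: the remark preceding the statement guarantees that every $4$-subset of $\Omega_V=V$ contains an even number of triples of $\Delta_V$, so $(\Omega_V,\Delta_V)$ satisfies Definition \ref{def:two-graph}. The argument then splits into the two-sided implication giving well-definedness together with injectivity, and the realizability giving surjectivity.

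The well-definedness and injectivity are two halves of a single equivalence, both supplied by the Lemma above (\cite[Lemma 3.9]{seidel1973}). Suppose $(V,E)$ and $(V',E')$ are switching-equivalent. By that Lemma there is a bijection $\psi\colon V\to V'$ under which every triple and its image have the same edge-parity; since membership in $\Delta_V$ (resp. $\Delta_{V'}$) is by definition exactly the condition of containing an odd number of edges, $\psi$ satisfies $\{v_i,v_j,v_k\}\in\Delta_V\iff\{\psi(v_i),\psi(v_j),\psi(v_k)\}\in\Delta_{V'}$, which is precisely an equivalence of two-graphs in the sense of Definition \ref{def:equiv-two-graph}. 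Reading the same chain of equivalences backwards, any two-graph equivalence $\psi$ between $(\Omega_V,\Delta_V)$ and $(\Omega_{V'},\Delta_{V'})$ is automatically a parity-preserving bijection of the underlying graphs, so $(V,E)$ and $(V',E')$ are switching-equivalent by the same Lemma. Hence the induced map on classes is well defined and injective; note that each step is manifestly natural under relabeling of vertices, so the argument is equally valid at the level of fixed labels.

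It remains to prove surjectivity, which is the main point. Given an arbitrary two-graph $(\Omega,\Delta)$ with $\Omega=\{\omega_1,\ldots,\omega_n\}$, I would realize it by a cone construction over a chosen base vertex $\omega_1$: declare $\omega_1$ isolated, and for $i,j\geq 2$ let $\{\omega_i,\omega_j\}$ be an edge precisely when $\{\omega_1,\omega_i,\omega_j\}\in\Delta$. Writing $(\Omega,\Delta')$ for the two-graph associated to this graph, I would check that $\Delta'=\Delta$. For a triple $T$ containing $\omega_1$, say $T=\{\omega_1,\omega_i,\omega_j\}$, the only possible edge inside $T$ is $\{\omega_i,\omega_j\}$, so $T$ has odd edge-parity iff $\{\omega_1,\omega_i,\omega_j\}\in\Delta$, giving $T\in\Delta'\iff T\in\Delta$. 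For a triple $T=\{\omega_i,\omega_j,\omega_k\}$ with $i,j,k\geq 2$, the number of edges inside $T$ equals the number of the three triples $\{\omega_1,\omega_i,\omega_j\}$, $\{\omega_1,\omega_i,\omega_k\}$, $\{\omega_1,\omega_j,\omega_k\}$ lying in $\Delta$; applying the defining axiom of $(\Omega,\Delta)$ to the $4$-subset $\{\omega_1,\omega_i,\omega_j,\omega_k\}$ forces the total number of its four triples in $\Delta$ to be even, so the edge-parity of $T$ agrees modulo $2$ with the indicator of $T\in\Delta$, whence again $T\in\Delta'\iff T\in\Delta$.

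The principal obstacle is exactly this last verification for triples avoiding the base vertex: it is the only place where the two-graph axiom is genuinely used, and it is precisely what makes the cone construction consistent. Once it is in hand, the constructed graph maps to the prescribed two-graph, establishing surjectivity; combined with the well-definedness and injectivity of the second paragraph, this yields the desired one-to-one correspondence between switching classes and (equivalence classes of) two-graphs on $n$ vertices.
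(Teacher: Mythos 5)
Your proposal is correct, but there is nothing in the paper to compare it against: the paper does not prove this theorem at all, it simply quotes it as Theorem 4.2 of Seidel's survey and relies on the citation. What you have written is, in essence, the standard proof (and in substance Seidel's own). Your two halves are sound. For well-definedness and injectivity, the quoted Lemma (Seidel, Lemma 3.9) characterizes switching-equivalence as the existence of a triple-parity-preserving bijection, and since membership in $\Delta_V$ is by definition the condition of odd edge-parity, such bijections are literally the same data as two-graph equivalences; reading the equivalence in both directions is exactly right. For surjectivity, the cone construction over a base vertex $\omega_1$ is the key idea, and your verification is correct: for a triple through $\omega_1$ the edge count is the indicator of the defining triple, and for a triple $\{\omega_i,\omega_j,\omega_k\}$ avoiding $\omega_1$ the two-graph axiom applied to the $4$-subset $\{\omega_1,\omega_i,\omega_j,\omega_k\}$ forces the edge count to agree mod $2$ with the indicator of membership in $\Delta$. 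This is indeed the only place the axiom is used, as you say.

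One caveat deserves mention. Under the paper's definition, switching-equivalence includes simultaneous row-and-column interchanges, i.e.\ vertex permutations, so what you actually prove is a bijection between switching classes and \emph{equivalence classes} of two-graphs, whereas the theorem as worded (\lq\lq two-graph structures\rq\rq\ on a set of $n$ elements) suggests the fixed-label statement: graphs on a fixed $\Omega$ up to switching alone versus triple systems $\Delta$ on $\Omega$. Your parenthetical \lq\lq naturality\rq\rq\ remark is the one under-justified spot, since Lemma 3.9 as stated only produces \emph{some} bijection, not the identity. The gap is easily closed with ingredients you already have: switching at a single vertex preserves every triple's parity (a triple through the switched vertex has exactly two of its three potential edges flipped), so the two-graph structure is a genuine invariant of the fixed-label switching class; and conversely any graph whose two-graph is $\Delta$ can be switched so that $\omega_1$ becomes isolated, after which it must coincide with your cone graph, giving fixed-label injectivity. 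With that small repair the argument proves both readings of the statement.
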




\section{Two-graphs and switching classes associated to configurations of bitangents}\label{sec:oursetting}
In this section, we explain how we associate two-graphs and switching classes with plane curves of the form $Q + L_{I}$ under our setting. 

\subsection{Two-graphs associated to $Q+L_I$}

Let $Q$ be a smooth plane quartic. It is well known that a smooth plane quartic has $28$ bitangent lines $L_1, \ldots, L_{28}$. For $I \subset \{ 1, \ldots , 28 \}$, Put $\mcL_{I}:=\{ L_{i} \mid i \in I \}$.
Let  $\Delta_I$ be the set of triples $\{ L_{i}, L_{j}, L_{k} \}\subset \mcL_I$ such that there exists a conic passing through all six tangent points of $Q  \cap (L_{i}+ L_{j}+ L_{k})$. As will be shown below $(\mcL_I, \Delta_I)$ becomes a two-graph. 
\begin{defin}
The two-graph $\mcL_I=(\mcL_I, \Delta_I)$ is said to be the two-graph associated to $Q+L_I$.
\end{defin}

The existence of a conic is related to the concept of \lq\lq connected numbers" as in  the following Proposition. See \cite{shirane17} or \cite{ban-yam-tok2018}  for details on connected numbers. 
\begin{prop}\label{prop:CtoC}
For all triples $\{ L_{i}, L_{j}, L_{k} \}\subset \mcL_I$, $\{ L_{i}, L_{j}, L_{k} \} \in \Delta_I$ if and only if the connected number of $L_{i}+L_{j}+L_{k}$ is equal to $2$.
\end{prop}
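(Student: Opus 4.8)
The plan is to connect the two equivalent characterizations — the existence of a conic through the six tangent points, and the connected number of $L_i + L_j + L_k$ being equal to $2$ — via the theory of connected numbers and the double cover of $\PP^2$ branched along $Q$. The key is that the bitangent lines and the conics through the tangent points are precisely the objects that control how the line arrangement lifts to this double cover.

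\medskip

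\emph{The approach.} First I would set up the double cover $f\colon S \to \PP^2$ branched along the smooth quartic $Q$; this is a degree-$8$ del Pezzo surface (a smooth surface, since $Q$ is smooth). Each bitangent line $L_i$ meets $Q$ at two tangent points with multiplicity $2$, so its total transform $f^{*}L_i$ splits as $L_i^{+} + L_i^{-}$ into two irreducible components, each mapping isomorphically onto $L_i$; this is the standard ``splitting'' of a bitangent in the double cover. The connected number of $L_i + L_j + L_k$ is, by definition (see \cite{shirane17}, \cite{ban-yam-tok2018}), the number of connected components of the preimage $f^{-1}(L_i \cup L_j \cup L_k)$, equivalently a count governed by the classes of the $L_\bullet^{\pm}$ in the Picard group or by the relevant reciprocity/connectedness data recalled there.

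\medskip

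\emph{The main steps, in order.} The plan is to proceed as follows.
\begin{enumerate}
\item Recall from \cite{shirane17} or \cite{ban-yam-tok2018} the precise definition of the connected number of a divisor $L_i+L_j+L_k$ and note that for three bitangents it takes one of the two values $2$ or $1$ (the preimage of three lines, each splitting into two pieces, has either two or one connected components).
\item Show that the connected number equals $2$ exactly when the six half-components can be consistently partitioned into two groups on $S$, which is governed by whether a certain relation holds among the classes $[L_i^{\pm}], [L_j^{\pm}], [L_k^{\pm}]$.
\item Translate this relation into the existence of a section — i.e.\ a conic $D$ — whose pullback interpolates between the two sheets: the six tangent points of $Q \cap (L_i + L_j + L_k)$ lie on a conic if and only if the divisor $\tfrac{1}{2} f^{*}(L_i + L_j + L_k) - f^{*}(\text{conic})$, suitably interpreted, is linearly trivial on $S$, which is exactly the condition for the preimage to disconnect.
\end{enumerate}
The cleanest route is to invoke the known equivalence, already recorded in the cited references, between ``there is a conic through the six tangent points'' and ``$L_i+L_j+L_k$ admits a degree-$2$ Galois cover / has connected number $2$''. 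Concretely, a conic through the six contact points is the branch locus of the part of $f^{-1}(L_i \cup L_j \cup L_k)$ that splits, and its existence is the algebraic condition for the splitting.

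\medskip

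\emph{The main obstacle} will be pinning down the exact normalization of ``connected number'' used in \cite{shirane17} and \cite{ban-yam-tok2018} so that the two distinguished values align correctly: I must verify that connected number $=2$ (rather than its complementary value) is the case in which the six points become conconic. This amounts to checking that when the conic exists the preimage $f^{-1}(L_i \cup L_j \cup L_k)$ genuinely \emph{disconnects} into two components, and when no conic exists the preimage stays connected. Once that bookkeeping is fixed, the equivalence in the Proposition follows immediately from the definition of $\Delta_I$ and the cited characterization of connected numbers, so the bulk of the work is this single careful identification rather than any lengthy computation.
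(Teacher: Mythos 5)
Your plan follows the same circle of ideas as the proof the paper actually relies on: the paper's entire proof is a citation of \cite[Proposition 3.3]{bannai2016}, where precisely this double-cover argument is carried out for \emph{smooth} cubic contact curves, plus the remark that the argument goes through when the contact cubic is the reducible curve $L_i+L_j+L_k$. But your write-up has a genuine error at its foundation, and it is not the mere ``normalization'' bookkeeping you flag at the end. The connected number is \emph{not} the number of connected components of $f^{-1}(L_i\cup L_j\cup L_k)$: that set is \emph{always} connected, so with your working definition the proposition would be false. Indeed, at a tangency point the cover has local equation $z^2=y+ax^2+\cdots$ with $L=\{y=0\}$ and $a\neq 0$, so $f^{-1}(L)$ is locally $\{z^2=ax^2(1+\cdots)\}$, two branches \emph{crossing} at the ramification point; hence the two lifts $L_i^{+}$ and $L_i^{-}$ meet each other at the two points over the tangency points of $L_i$, each $f^{-1}(L_i)$ is already connected, and since the three lines pairwise intersect, the full preimage of the triple is connected no matter what. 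The definition used in \cite{shirane17} and \cite{ban-yam-tok2018} counts the connected components of the preimage of $(L_i\cup L_j\cup L_k)\setminus Q$, i.e.\ after deleting the branch locus; only then do you get six punctured lifts forming a $2$-regular incidence graph whose component count is $2$ (two triangles) or $1$ (a hexagon), which is the dichotomy of your Step 1. Your closing paragraph shows this is unresolved: you pose the problem as deciding \emph{which} of the two values corresponds to the conic, and assert that when the conic exists ``the preimage genuinely disconnects,'' which is false for the unpunctured preimage.

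Two further points. First, the double cover of $\PP^2$ branched along a smooth quartic is a del Pezzo surface of degree $2$, not $8$; its $56$ lines are exactly the lifts $L_i^{\pm}$ of the $28$ bitangents. Second, the equivalence you propose to ``invoke'' as already recorded in the references is recorded there only for smooth cubic contact curves (that is the statement of \cite[Proposition 3.3]{bannai2016}); for the reducible, nodal contact cubic $D=L_i+L_j+L_k$ one still has to check that the argument survives --- roughly, that the six tangency points lie on a conic iff $F|_D$ is a square in $H^0(\mathcal{O}_D(4))$, where $F$ defines $Q$, iff the induced cover of $D\setminus Q$ splits, all of which uses only that $D$ is a connected contact cubic --- and that verification is exactly the one sentence of content in the paper's proof. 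So the intended route is the right one, but as written the proposal both rests on an incorrect definition and defers the remaining substantive step to references that do not quite contain it.
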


\begin{proof}
The same statement for smooth cubic contact curves instead of $L_i+L_j+L_k$ is proved in  \cite[Proposition 3.3]{bannai2016}. The same proof works in our case where $L_i+L_j+L_k$ is a reducible thus singular contact curve.   
\end{proof}

\begin{thm}
The pair $( \mcL_I , \Delta_I )$ is a two-graph for any $I \subset \{1,\ldots, 28\}$.
\end{thm}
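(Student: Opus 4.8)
The plan is to translate the defining condition of $\Delta_I$ into the language of theta characteristics on $Q$ and then reduce the required parity statement to an elementary $\FF_2$-counting argument. First I would recall that each bitangent $L_i$ meets $Q$ in the divisor $2D_i$, where $D_i$ is the reduced degree-two divisor of its two contact points, and that $2D_i \sim K_Q = H$ (the hyperplane class); thus each $D_i$ is an odd theta characteristic, and the $28$ bitangents correspond to the $28$ odd theta characteristics. Given a triple, set $R_{ijk} := 2H - D_i - D_j - D_k$, a class of degree $2$ satisfying $2R_{ijk} \sim 4H - 3H = H = K_Q$, so $R_{ijk}$ is again a theta characteristic. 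Since the restriction $H^0(\PP^2, \mcO(2)) \to H^0(Q, 2H)$ is an isomorphism (both spaces have dimension $6$, as $H^1(\PP^2,\mcO(-2))=0$ and $h^0(Q,2H)=6$ by Riemann--Roch), a conic passes through the six contact points exactly when $D_i + D_j + D_k$ lies in a member of $|2H|$, i.e. exactly when $R_{ijk}$ is effective. As $Q$ is a non-hyperelliptic curve of genus $3$, a degree-two theta characteristic is effective if and only if it is odd, so I would obtain
\[
\{L_i, L_j, L_k\} \in \Delta_I \iff R_{ijk} \text{ is an odd theta characteristic.}
\]

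Next I would fix an even theta characteristic $\theta_*$ as a base point, write each $D_i \sim \theta_* + a_i$ with $a_i$ in the $2$-torsion $J[2]$ of the Jacobian of $Q$, and invoke the classical fact (Riemann--Mumford) that the parity function $\theta \mapsto h^0(\theta) \bmod 2$ corresponds, after this translation, to a quadratic form $q: J[2] \to \FF_2$ whose associated bilinear form $\langle\,\cdot\,,\cdot\,\rangle$ is the Weil pairing, normalized so that $q(0) = 0$. Oddness of $D_i$ gives $q(a_i) = 1$, and a short computation in the torsor of theta characteristics, using $2\theta_* \sim K_Q$, shows $R_{ijk} \sim \theta_* + (a_i + a_j + a_k)$. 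Hence the indicator function of $\Delta_I$ is literally the value of a quadratic form: $\{L_i, L_j, L_k\} \in \Delta_I$ if and only if $q(a_i + a_j + a_k) = 1$.

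The two-graph axiom then follows from a counting identity. For a $4$-subset $\{L_i, L_j, L_k, L_l\}$ I would sum this indicator over its four triples and expand each term via $q(u+v+w) = q(u)+q(v)+q(w) + \langle u,v\rangle + \langle u,w\rangle + \langle v,w\rangle$. Each single term $q(a_\bullet)$ occurs in three of the four triples, contributing an odd multiple of $\sum_\bullet q(a_\bullet) = 4 \equiv 0$, while each pairing $\langle a_\alpha, a_\beta\rangle$ occurs in exactly two triples, contributing $2\sum \langle a_\bullet,a_\bullet\rangle \equiv 0 \pmod 2$. Thus the total vanishes in $\FF_2$, i.e. the number of triples of the $4$-subset lying in $\Delta_I$ is even, which is precisely the condition in Definition \ref{def:two-graph}.

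The genuinely substantive steps are the two geometric inputs of the first paragraph — the equivalence of the conic condition with effectivity of $R_{ijk}$, and the identification of effective degree-two theta characteristics with odd ones — together with the quadratic nature of the parity function; the concluding combinatorics is routine. The main obstacle I anticipate is making the first equivalence clean when the six contact points fail to be in general position (for instance when two bitangents share a contact point, or when tangent points collide): this should be handled at the level of divisor classes rather than as a set of points, and the effective-if-and-only-if-odd statement is unconditional here precisely because a non-hyperelliptic genus-$3$ curve carries no $g^1_2$, so $h^0$ of a degree-two theta characteristic never exceeds $1$.
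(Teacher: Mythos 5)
Your proof is correct, but it takes a genuinely different route from the paper. The paper's own proof is essentially a two-line citation: it invokes Proposition~\ref{prop:CtoC} (the conic condition is equivalent to the connected number of $L_i+L_j+L_k$ being $2$, via \cite[Proposition 3.3]{bannai2016}) and then quotes \cite[Lemma 3.5]{ban-yam-tok2018}, which asserts exactly the required evenness of connected-number-$2$ triples in every $4$-subset; the underlying mechanism there is the Mordell--Weil lattice $E_7^\ast$ of the associated rational elliptic surface, i.e.\ the inner-product computation that also drives Section~\ref{sec:oursetting}. You instead stay entirely on the curve: bitangents as odd theta characteristics, the residual class $R_{ijk}=2H-D_i-D_j-D_k$, effectivity equals oddness (using non-hyperellipticity of a smooth quartic), and then the Riemann--Mumford quadratic form $q$ on $J[2]$, so that the two-graph axiom becomes the elementary identity that summing $q(a_i+a_j+a_k)$ over the four triples of a tetrad kills the $q$-terms (all equal to $1$, appearing three times each) and the Weil-pairing terms (appearing twice each) in $\FF_2$. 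What your approach buys is self-containedness --- it re-proves the evenness lemma rather than outsourcing it, avoids the elliptic-surface machinery entirely, and makes visible the classical reason two-graphs appear here at all (this is the syzygetic-tetrad structure on the $28$ bitangents, which the paper only gestures at in the remark following Lemma~\ref{lem:uniquetetrad}); it would also give an independent, intrinsic proof of Lemma~\ref{lem:uniquetetrad} itself. What the paper's approach buys is brevity and coherence: the connected-number formulation is precisely the one needed for the topological invariance in Theorem~\ref{thm:main1}, so a single invariant serves both purposes. One point deserves care in your write-up, and you flagged it yourself: the equivalence ``conic through the six contact points $\Leftrightarrow$ $R_{ijk}$ effective'' must be read divisor-theoretically (the conic cuts out a divisor containing $D_i+D_j+D_k$), since for a quartic with a hyperflex the six points degenerate to five and the naive set-theoretic reading would trivially produce a conic; this is the correct interpretation of $\Delta_I$ (and is implicitly the one used in \cite{bannai2016}), so your proof is sound once that convention is fixed.
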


\begin{proof}
 By \cite[Lemma 3.5]{ban-yam-tok2018}, the number of triples in each $4$-subset of $ \mcL_I$ having connected number equal to 2 must be even. Hence, by Proposition  \ref{prop:CtoC} the statement holds. 
\end{proof}

Note that if $I^\prime\subset I$, then $\mcL_{I^\prime}$ can be identified with  an induced sub-two-graph of $\mcL_I$ via the obvious inclusion map.  


\subsection{Switching classes associated to $Q+L_I$ via the $E_{7}^{\ast}$ lattice} 

To a quartic $Q$ and a smooth point $z\in Q$, a rational elliptic surface $S_{Q, z}$ can be associated as described in \cite{tokunaga14}. The properties of $S_{Q, z}$ and its group of sections give valuable data in studying configurations involving $Q$. 
 When $Q$ is a smooth quartic and $z \in Q$ is a general point, by \cite{shioda90}, the Mordell-Weil lattice $\mathrm{MW}(S_{Q , z})$ is isomorphic to the dual root lattice $E_{7}^{\ast}$. The lattice $E_{7}^{\ast}$ can be realized as a sub-lattice of $\RR^{8}$ in a way so that the $56$ minimal vectors of norm $\frac{3}{2}$ in $E_{7}^{\ast}$ are equal to $\pm \frac{1}{4} [ -1, \cdots , -1, 3, 3 ] \in \RR^{8}$ up to permutation. We denote these vectors by
\[
\pm u_{jk} = \pm \frac{1}{4} [ \ \cdots , \ \overset{j}{\check{3}}, \ \cdots , \overset{k}{\check{3}}, \cdots \ ], 
\]
where $1 \leq j < k \leq 8$. 
We also denote the $56$ minimal vectors $\pm u_{jk}$ by $\pm v_{1}, \ldots , \pm v_{28}$ for convenience.
On the other hand, corresponding to the $56$ minimal vectors in $E_{7}^{\ast}$, there are $56$ $\CC(t)$-rational points.
Note that, by \cite{shioda93}, the lines obtained from the $x$-coordinates of the $56$ $\CC(t)$-rational points are bitangents of $Q$. Hence, each pair $\pm v_i$ corresponds to a bitangent $L_i$. Conversely, any bitangent can be obtained by this way. 
 Now for $I \subset \{ 1, \ldots , 28 \}$ with $|I|=n \ (1 \leq n \leq 28)$, 
 by choosing a representative $v_i$ from each pair $\pm v_i$, we obtain a set of  minimal vectors $\{ v_{i} \mid i \in I \}$. Then, we construct a simple graph $(V_{I}, E_{I})$ from $\{ v_{i} \mid i \in I \}$ as follows:

\begin{itemize}
\item Let  $V_{I}=\{ v_{i} \mid i \in I \}$. 
\item Define the edge set $E_{I}$ as follows:
\begin{itemize}
\item If $v_{i_{1}} \cdot v_{i_{2}}=- \frac{1}{2}$ for $v_{i_{1}}, v_{i_{2}} \in V_I$, add an edge $\overline{v_{i_{1}}v_{i_{2}}}$
\item If $v_{i_{1}} \cdot v_{i_{2}}= \frac{1}{2}$ for $v_{i_{1}}, v_{i_{2}} \in V_I$, add no edge between $v_{i_{1}}$ and $v_{i_{2}}$.
\end{itemize}
\end{itemize}

The construction above depends on the choice of  representatives $\{v_i\}$. However, choosing $-v_{i}$ instead of $v_{i}$  corresponds to switching the existence/non-existence of the edges of the corresponding graph as in \S 2. Hence, we have a switching class represented by $(V_I, E_I)$ associated to the set of bitangents $\mcL_I$.

\begin{example}[$n=4$]
Denote each $56$ minimal vectors $\pm u_{jk}$ by $\pm v_{i}$, which satisfies $v_{1}=u_{14}, v_{2}=u_{18}, v_{3}=u_{28}, v_{4}=u_{38}$. Take $I:=\{ 1, 2, 3, 4 \} \subset \{ 1, \ldots , 28 \}$, then we obtain a simple graph $(V_{I}, E_{I})$:
\begin{center}
\begin{tikzpicture}[baseline=1]
\coordinate (A) at (45:1);
\coordinate (B) at (135:1);
\coordinate (C) at (225:1);
\coordinate (D) at (315:1);
\draw (A)--(B);
\draw (B)--(C)--(D)--(B) ;
\draw (A) [fill=white] circle (2pt) node [above right] {$v_{1}$}; 
\draw (B) [fill=white] circle (2pt) node [above left] {$v_{2}$}; 
\draw (C) [fill=white] circle (2pt) node [below left] {$v_{3}$}; 
\draw (D) [fill=white] circle (2pt) node [below right] {$v_{4}$};
\end{tikzpicture} 
\end{center}
The switching class represented by $(V_{I}, E_{I})$ corresponds to the two graph $( \Omega_{V_{I}}, \Delta_{V_{I}} )$ with
\[
\Omega_{V_{I}}= V_{I} , \ \Delta_{V_{I}}= \{ \{ v_{1}, v_{3}, v_{4} \}, \{ v_{2}, v_{3}, v_{4} \} \} . 
\] 
\end{example}

The parity of the number of edges in each triple of the graph $(V_{I}, E_{I})$ is characterised by the following Proposition. 

\begin{prop}\label{prop:StoC}
For $\{ L_{i_{1}}, L_{i_{2}}, L_{i_{3}} \} \subset \mcL$, the number of edges among the corresponding triple $\{ v_{i_{1}}, v_{i_{2}}, v_{i_{3}} \}$ is odd if and only if the connected number of $L_{i_{1}}+L_{i_{2}}+L_{i_{3}}$ equals $2$.   
\end{prop}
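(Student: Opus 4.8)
The plan is to realize both quantities on the double cover $X\to\PP^2$ branched along $Q$, which is a del Pezzo surface of degree two; its $56$ $(-1)$-curves are exactly the components obtained by splitting the $28$ bitangents, and under the identification $\MW(S_{Q,z})\cong E_{7}^{\ast}$ these correspond to the $56$ minimal vectors $\pm v_i$. Following the definition of the connected number in \cite{shirane17,ban-yam-tok2018}, it is the number of connected components of the preimage on $X$ of the contact cubic $L_{i_1}+L_{i_2}+L_{i_3}$, so the task is to read this component count off the combinatorial edge data.

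First I would record the pairing of minimal vectors: writing a chosen representative as $v_i=u_{j_ik_i}$, a direct evaluation gives $u_{jk}\cdot u_{lm}=-\tfrac12$ when the index pairs are disjoint and $u_{jk}\cdot u_{lm}=+\tfrac12$ when they share exactly one element, the signs being reversed under $v\mapsto -v$. Hence, for the triple $\{v_{i_1},v_{i_2},v_{i_3}\}$, the pattern of the three pairings is precisely its edge/non-edge data in $(V_I,E_I)$, and these pairings match the incidence of the corresponding $(-1)$-curves on $X$.

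Next I would extract the geometry. Each bitangent $L_i$, being a contact line, splits on $X$ into two $(-1)$-curves $L_i^{+},L_i^{-}$, and choosing $\pm v_i$ names one of them. Over the generic intersection point $L_{i_a}\cap L_{i_b}\notin Q$ the curve $L_{i_a}^{+}$ meets exactly one of $L_{i_b}^{\pm}$, so each pair of bitangents yields a perfect matching between $\{L_{i_a}^{\pm}\}$ and $\{L_{i_b}^{\pm}\}$ that is either ``parallel'' or ``crossed'', governed by the sign of $v_{i_a}\cdot v_{i_b}$, i.e. by whether $\{v_{i_a},v_{i_b}\}$ is an edge. The union of the six curves then has every vertex of degree two and so is a disjoint union of cycles; tracing the loop $L_{i_1}\to L_{i_2}\to L_{i_3}\to L_{i_1}$ and following the $\pm$ label returns to the start after three steps exactly when the number of crossed pairs is even, giving two triangles (connected number $2$), and otherwise a single hexagon (connected number $1$). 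Since there are three pairs in total, an even number of crossed pairs is the same as an odd number of edges, which is exactly the asserted equivalence; combined with Proposition \ref{prop:CtoC} this also identifies the switching-class two-graph with $\Delta_I$.

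The hard part will be the dictionary underlying the third step: proving that the sign of the pairing $v_{i_a}\cdot v_{i_b}$ really determines which of $L_{i_b}^{\pm}$ meets $L_{i_a}^{+}$, and fixing the edge convention so that an edge corresponds to the parallel matching, consistently with the $n=4$ example and the stated rule. This is where the input of \cite{tokunaga14,shioda90,shioda93} on the sections of $S_{Q,z}$ and the $(-1)$-curves of $X$ enters; once the sign dictionary is settled, the genericity of the three intersection points and the cycle count are routine.
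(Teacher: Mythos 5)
Your proposal is correct in substance, but it takes a genuinely different route from the paper: the paper's entire proof of Proposition \ref{prop:StoC} is a one-line citation of \cite{ban-yam-tok2018}, where the equivalence between connected number $2$ and an odd number of edges was already established, whereas you reconstruct the underlying geometry on the degree-two del Pezzo double cover. Your mechanism is the right one: each pair of bitangents induces a matching between $\{L_{i_a}^{\pm}\}$ and $\{L_{i_b}^{\pm}\}$, the six curves decompose into cycles, and two triangles versus one hexagon is decided by the parity of the crossed matchings; your pairing values ($u_{jk}\cdot u_{lm}=-\tfrac12$ for disjoint index pairs, $+\tfrac12$ for pairs sharing one index) are also correct. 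Moreover, the step you flag as ``the hard part'' does close with exactly the tools you name: on $S_{Q,z}$ the trivial lattice is $A_1$ (the reducible fiber lies over the tangent line of $Q$ at $z$), so a section $P$ of height $\tfrac32$ must satisfy $(P\cdot O)=0$ and pass through the non-identity component of that fiber, and Shioda's height formula \cite{shioda90} gives, for two distinct minimal sections, $\langle P,P'\rangle = 1+(P\cdot O)+(P'\cdot O)-(P\cdot P')-\mathrm{contr}(P,P') = \tfrac12-(P\cdot P')$. Since two such sections can only meet over the single point $L_{i_a}\cap L_{i_b}$ (which never lies on $Q$ for distinct bitangents), pairing $-\tfrac12$ means precisely that the two named curves meet (your ``parallel'' matching) and $+\tfrac12$ means ``crossed''; the sanity check $\langle P,-P\rangle=-\tfrac32$, forcing $P\cdot(-P)=2$, matches the two intersection points over the tangencies. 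With this dictionary your parity count yields the statement.

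Two cautions. First, the connected number is the number of connected components of $\phi^{-1}(C\setminus Q)$, not of $\phi^{-1}(C)$: since $L_i^{+}$ and $L_i^{-}$ meet at the two points lying over the tangency points of $L_i$, the full preimage of $L_{i_1}+L_{i_2}+L_{i_3}$ is always connected, and your cycle count is valid only because it discards those intersections; this should be said explicitly. Second, you cannot fix the edge convention ``consistently with the $n=4$ example and the stated rule,'' because these two conflict with each other: the example's figure and its $\Delta_{V_I}$ are computed with the opposite convention (edge when $v_{i_1}\cdot v_{i_2}=+\tfrac12$). For instance, under the stated rule the triple $u_{18},u_{28},u_{38}$ spans no edges, and indeed it is an azygetic (Aronhold-type) triple with connected number $1$, so it cannot belong to $\Delta$. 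The convention that makes the proposition true, and the one your argument needs, is the stated rule: an edge if and only if the pairing is $-\tfrac12$, if and only if the named sections meet.
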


\begin{proof}
By \cite{ban-yam-tok2018}, the connected number of $L_{i_{1}}+L_{i_{2}}+L_{i_{3}}$ equals $2$ if and only if the number of edges of the corresponding triple $\{ v_{i_{1}}, v_{i_{2}}, v_{i_{3}} \}$ of the corresponding graph is $1$ or $3$.  
\end{proof}

So far, we have defined two objects associated to $ Q+L_I$, the two-graph $\mcL_I$ and the switching class $(V_I, E_I)$. Actually the two objects are compatible in the following sense.

\begin{prop}
The two graph  corresponding to the switching class represented $(V_I, E_I)$  and the two-graph $\mcL_I$ are equivalent.
\end{prop}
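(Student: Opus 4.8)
The plan is to show that the two constructions assign the same triple set to $\mcL_I$, which by Definition \ref{def:equiv-two-graph} (with $\psi$ the identity bijection $v_i\mapsto L_i$) is exactly what equivalence of the two two-graphs means. Concretely, the switching class $(V_I,E_I)$ determines a two-graph $(\Omega_{V_I},\Delta_{V_I})$ via the recipe stated just before Theorem \cite{seidel1973} 4.2, namely $\Omega_{V_I}=V_I$ and $\Delta_{V_I}$ consists of those triples $\{v_{i},v_{j},v_{k}\}$ spanning an odd number of edges in $(V_I,E_I)$. On the other side, $\mcL_I=(\mcL_I,\Delta_I)$ has triple set $\Delta_I$ defined geometrically by the existence of a conic through the six tangent points. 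Since the vertex sets are identified by $v_i\leftrightarrow L_i$, the entire content of the proposition reduces to the single assertion
\[
\{v_{i},v_{j},v_{k}\}\in\Delta_{V_I}\iff \{L_{i},L_{j},L_{k}\}\in\Delta_I .
\]

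First I would unwind the left-hand side: by construction $\{v_{i},v_{j},v_{k}\}\in\Delta_{V_I}$ means the number of edges among $\{v_{i},v_{j},v_{k}\}$ is odd, i.e.\ is $1$ or $3$. Next I would unwind the right-hand side: by definition $\{L_{i},L_{j},L_{k}\}\in\Delta_I$ means there is a conic passing through the six tangent points of $Q\cap(L_i+L_j+L_k)$. The bridge between the two sides is the notion of connected number, and the two preceding propositions are tailored exactly for this. Proposition \ref{prop:StoC} states that the edge count of the triple $\{v_{i},v_{j},v_{k}\}$ is odd if and only if the connected number of $L_{i}+L_{j}+L_{k}$ equals $2$, which identifies the left-hand condition with the connected-number condition. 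Proposition \ref{prop:CtoC} states that $\{L_{i},L_{j},L_{k}\}\in\Delta_I$ if and only if the connected number of $L_{i}+L_{j}+L_{k}$ equals $2$, which identifies the right-hand condition with the same connected-number condition. Chaining these two equivalences through the common middle term yields the displayed biconditional for every triple.

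Therefore the two triple sets coincide under the identification $v_i\leftrightarrow L_i$, and taking $\psi$ to be this identity bijection verifies the conditions of Definition \ref{def:equiv-two-graph}, so the two two-graphs are equivalent. I do not anticipate a genuine obstacle here: the proposition is essentially a bookkeeping statement that both independently-defined triple sets are the preimage of the single condition ``connected number $=2$'' under the natural vertex identification, and all the real mathematical content has already been isolated in Propositions \ref{prop:CtoC} and \ref{prop:StoC}. The only point requiring a line of care is to state explicitly that the vertex identification $v_i\leftrightarrow L_i$ is the bijection $\psi$ realizing the equivalence and that it is independent of the choice of representatives $\pm v_i$ (since changing representatives only switches edges within the fixed switching class and hence does not alter the associated two-graph).
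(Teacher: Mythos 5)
Your proof is correct and takes exactly the same route as the paper: the paper's own proof simply cites Propositions \ref{prop:CtoC} and \ref{prop:StoC} and chains them through the common condition ``connected number $=2$'' under the identification $v_i\leftrightarrow L_i$, which is precisely what you spell out. Your additional remark that the choice of representatives $\pm v_i$ does not matter is a nice explicit touch, but it is already implicit in the paper's identification of $(V_I,E_I)$ as a switching class.
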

\begin{proof}
This is a direct consequence of Proposition \ref{prop:CtoC} and \ref{prop:StoC}.
\end{proof}

%
%
%

\section{Proof of Main Theorems}\label{sec:proof} 

In this section we prove Theorem \ref{thm:main1} and Theorem \ref{thm:main2}. First, we consider Theorem \ref{thm:main1}.


\begin{prop}\label{prop:homeo2equiv}
Let $I_1, I_2\subset \{1, \ldots, 28\}$ and let $\mcL_{I_1}$, $\mcL_{I_2}$ be their associated two graphs. If there exists a homeomorphism of pairs $h: (\PP^2, Q+L_{I_1})\rightarrow (\PP^2,Q+L_{I_2})$, $h$ induces an equivalence of two-graphs $h_\ast: \mcL_{I_1}\rightarrow \mcL_{I_2}$.
\end{prop}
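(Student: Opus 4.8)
The plan is to show that a homeomorphism of pairs must preserve all the combinatorial data encoded in the two-graph. The two-graph $\mcL_{I}$ is defined purely in terms of which triples of bitangents $\{L_i, L_j, L_k\}$ admit a conic through the six tangent points, which by Proposition \ref{prop:CtoC} is equivalent to the connected number of $L_i + L_j + L_k$ being $2$. So the strategy reduces to two things: (1) a homeomorphism $h$ must carry the quartic to the quartic and bitangents to bitangents, thereby inducing a bijection $\psi: \mcL_{I_1} \to \mcL_{I_2}$; and (2) this bijection must respect membership in $\Delta$, i.e. $\{L_i, L_j, L_k\} \in \Delta_{I_1}$ if and only if $\{\psi(L_i), \psi(L_j), \psi(L_k)\} \in \Delta_{I_2}$.

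For step (1), I would argue that $h$ distinguishes the irreducible components of $Q+L_{I_1}$ from those of $Q+L_{I_2}$ by their local and global topological type. The quartic $Q$ is the unique component of degree $4$ (the lines have degree $1$), and degree is detected topologically since homeomorphic tubular neighborhood pairs and the topology of $\PP^2$ determine the self-intersection / homology class up to sign; more simply, $Q$ is the unique component that is not a line, distinguishable by the local structure at its many singular (intersection) points and its genus. Thus $h(Q) = Q'$ and $h$ permutes the lines, giving a bijection $\psi$ between the index sets, hence between the vertex sets of the two two-graphs. This part should be routine given the standard fact that homeomorphisms of pairs preserve irreducible components and their degrees.

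For step (2), I would invoke that the connected number is a topological invariant of the embedded pair. The connected number of $L_i + L_j + L_k$ (relative to $Q$) is defined via the existence of certain branched covers or the splitting behavior of the contact conic, and such data is determined by the fundamental group of the complement and the peripheral structure, all of which are preserved by a homeomorphism of pairs $(\PP^2, Q+L_{I_1}) \cong (\PP^2, Q+L_{I_2})$. Concretely, the homeomorphism restricts to a homeomorphism of pairs on each sub-arrangement $(\PP^2, Q + L_i + L_j + L_k)$, and since the connected number is computed from the topology of these sub-arrangement complements, it is preserved. Therefore $\psi$ sends triples with connected number $2$ to triples with connected number $2$, which by Proposition \ref{prop:CtoC} is exactly the condition that $\psi$ preserves $\Delta$. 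Combined with Definition \ref{def:equiv-two-graph}, this gives the desired equivalence $h_\ast = \psi$.

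\textbf{The main obstacle} I anticipate is making precise that the connected number is genuinely a homeomorphism-of-pairs invariant and not merely an algebraic one. The connected number is defined through the existence of dihedral or abelian covers branched along the arrangement, and one must confirm that such covers are detected by $\pi_1(\PP^2 \setminus (Q+L_I))$ together with the meridian structure, which $h$ preserves. The cleanest route is probably to cite the established fact (from the references \cite{shirane17}, \cite{ban-yam-tok2018}) that connected numbers are \emph{splitting invariants} and hence invariants of the embedded topology of the pair; then the proof is essentially a matter of assembling step (1) and this invariance. If that invariance is not already black-boxed in the cited literature, I would need to spell out the argument that restricting $h$ to the relevant sub-arrangements preserves the covering-theoretic data defining the connected number.
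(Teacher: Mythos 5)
Your proposal is correct and follows essentially the same route as the paper: first show $h(Q)=Q$ (the quartic being the unique non-line component, of genus $3$), so $h$ permutes the bitangents, then use the topological invariance of connected numbers under homeomorphisms of pairs (which the paper black-boxes by citing \cite[Proposition 2.1]{ban-yam-tok2018}, exactly the citation route you anticipated) together with Proposition \ref{prop:CtoC} to see that $\Delta_{I_1}$ is carried to $\Delta_{I_2}$.
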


\begin{proof}
Since $ Q$ is a genus 3 curve and the other irreducible components are lines $h(Q)=Q$ necessarily, hence $h$ induces a bijection $h_\ast:\{L_i\}_{i\in I_1} \rightarrow \{L_j\}_{j\in I_2}$. If $\{L_{i_1}, L_{i_2}, L_{i_3}\}\in \Delta_{I_1}$ then $c_\phi(L_{i_1}+L_{i_2}+L_{i_3})=2$ and by \cite[Proposition 2.1]{ban-yam-tok2018}, $c_\phi(h_\ast(L_{i_1}+L_{i_2}+L_{i_3}))=c_\phi(h_\ast(L_{i_1})+h_\ast(L_{i_2})+h_\ast(L_{i_3}))=2$. Hence $\{h_\ast(L_{i_1}),h_\ast(L_{i_2}), h_\ast(L_{i_3})\}\in \Delta_{I_2}$. This implies that $h_\ast$ is an equivalence of two-graphs.
\end{proof}

The contrapositive of Proposition \ref{prop:homeo2equiv} gives Theorem \ref{thm:main1}.


\bigskip

Next, we consider Theorem  \ref{thm:main2}.
First, we consider the combinatorics of the arrangements of quartics and bitangent lines. For a general quartic, we have the following lemma:
\begin{lem}\label{lem:gen-quart}
Let $Q$ be a general smooth quartic curve. Then the following hold:
\begin{enumerate}
\item Every bitangent line is a proper bitangent, i.e. it is tangent to $Q$ at two distinct points.
\item Any  set of three bitangent lines do not intersect at a single point.
\end{enumerate}
\end{lem}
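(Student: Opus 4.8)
The plan is to prove both statements by a dimension-counting / genericity argument, viewing the space of smooth quartics as a dense open subset of $\PP^{14}$ (the projectivization of the $15$-dimensional space of quartic forms) and showing that each degenerate configuration is confined to a proper closed subvariety. Since there are only finitely many bitangents ($28$) and finitely many triples of them, it suffices to show that for each individual bitangent the non-proper locus is a proper closed condition, and that for each triple the concurrency locus is likewise proper; a finite union of proper closed subsets is still proper closed, so its complement is a dense open set of \emph{general} quartics for which both (1) and (2) hold simultaneously.

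For statement (1), I would first recall that the $28$ bitangents vary algebraically with $Q$ (for instance via the $E_7^\ast$/Mordell--Weil description recalled in Section \ref{sec:oursetting}, or classically as the odd theta characteristics), so that the incidence locus
\[
\mathcal{Z}_1=\{(Q,L): L \text{ is a bitangent of } Q \text{ tangent at a single point with contact } 4\}
\]
is a constructible, indeed closed, subset of the parameter space. A bitangent degenerates to a hyperflex (a line meeting $Q$ at one point with multiplicity $4$) exactly when two of its two tangency points collide, and the existence of a hyperflex is a single algebraic condition on the coefficients of $Q$; the standard fact is that a general quartic has no hyperflexes (hyperflexes occur on a hypersurface in $\PP^{14}$). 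I would exhibit one explicit smooth quartic with all $28$ bitangents proper to guarantee that this condition is not identically satisfied, which forces the non-proper locus to be a \emph{proper} closed subvariety.

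For statement (2), for each fixed triple of indices I would consider the map sending $Q$ to the three lines $L_{i_1},L_{i_2},L_{i_3}$ and then to the determinant whose vanishing expresses concurrency of three lines in $\PP^2$. This determinant is a polynomial in the coefficients of $Q$ (again using algebraic dependence of the bitangents on $Q$), so the concurrency locus is closed. To show it is \emph{proper}, I would again produce a single witness: an explicit smooth quartic for which the chosen three bitangents are in general position (not concurrent), which shows the determinant does not vanish identically. Taking the finite union over all $\binom{28}{3}$ triples and over all $28$ bitangents yields the desired proper closed \lq\lq bad locus,\rq\rq\ and a general $Q$ avoids it.

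The main obstacle is establishing the non-emptiness of the good locus, i.e.\ producing or guaranteeing witnesses that make each closed condition strictly proper rather than vacuously all of $\PP^{14}$; without this the argument collapses, since everything above only bounds the bad locus from the inside. One clean way to discharge this is to invoke the classical theory: the Fermat-type or a suitably chosen generic quartic (for example one whose bitangents and their intersection pattern are classically computed) has $28$ distinct proper bitangents in general position, and a single such example simultaneously certifies that none of the finitely many degeneracy polynomials vanishes identically. Alternatively, an irreducibility argument for the incidence variety together with a transversality/Bertini-type statement would show genericity directly, but the explicit-witness route is the most economical.
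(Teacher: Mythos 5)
Your overall reduction is exactly the paper's: both conditions are open on the space of smooth quartics, there are only finitely many bitangents and finitely many triples, so it suffices to produce a single quartic for which all $28$ bitangents are proper and no three are concurrent. The paper's proof is literally this one-line observation plus a citation for the witness. The genuine gap is in your witness, which you yourself identify as ``the main obstacle.'' The Fermat quartic $x^4+y^4+z^4=0$ is not a witness; it fails both conditions, about as badly as a smooth quartic can. At $p=(1:\omega:0)$ with $\omega^4=-1$, the tangent line $x+\omega^3y=0$ meets the curve only at $p$: parametrizing the line by $(-\omega^3 t: t: s)$ gives $\omega^{12}t^4+t^4+s^4=s^4$, so the intersection divisor is $4p$ and $p$ is a hyperflex. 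There are $12$ such points, so $12$ of the $28$ bitangents are hyperflex lines, violating (1). Worse, the four hyperflex lines $x+\omega^3y=0$, $\omega^4=-1$, all pass through $(0:0:1)$, so (2) fails as well. This is not an accident: the classically symmetric quartics whose bitangents are ``classically computed'' are systematically the \emph{worst} candidates, precisely because their automorphisms force degenerate bitangent configurations. Your fallbacks do not close the gap either: ``a suitably chosen generic quartic'' is circular (genericity of these two properties is exactly what is to be proved), and there is no off-the-shelf Bertini-type statement that applies to the condition ``no three of the $28$ bitangents are concurrent.''

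The paper discharges the witness step differently: it invokes Riemann's equations for bitangent lines (\cite[Section 6.1.3]{dolgachev}). These give an explicit construction of a quartic together with equations for all $28$ of its bitangents starting from freely chosen linear forms, so one can write down a concrete, non-symmetric example and verify both conditions by direct computation. Any correct completion of your argument needs some such explicitly checkable example (or a genuinely different mechanism, e.g.\ a monodromy argument using the transitivity of $W(E_7)$ on triples of bitangents to reduce (2) to a single triple --- but even that still requires one verified instance). As written, the closed-set bookkeeping is fine but the one step carrying actual content fails.
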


\begin{proof}
Do to the openness of the conditions, it is enough to find an example satisfying the statements. An example can be constructed using Riemann's Equations for bitangent lines given in \cite[Section 6.1.3]{dolgachev}.
\end{proof}

As a direct consequence of Lemma \ref{lem:gen-quart}, we have the following proposition.

\begin{prop}\label{prop:combinat}
Let $Q$ be a general smooth quartic. Then for a fixed integer $r$ $(1\leq r \leq 28)$, and any $I\subset\{1,\ldots, 28\}$ with $|I|=r$,  the curves of the form
\[
Q+L_I
\]
all have the same combinatorics.
\end{prop}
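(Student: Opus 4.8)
The plan is to establish that the combinatorial type of $Q+L_I$ is completely determined by the pattern of intersections among the quartic $Q$ and the lines $L_i$ ($i \in I$), and then to show that Lemma \ref{lem:gen-quart} forces this pattern to be the same for every choice of $I$ of the fixed size $r$. The combinatorial type of an arrangement records, up to the natural notion of equivalence, how the irreducible components meet: the local analytic type of each singular point and the incidence data (which components pass through which points). So the first step is to enumerate the possible singularities of $Q+L_I$ and verify that each is forced into a unique local type by the genericity hypotheses.

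First I would analyze the singular locus of $Q+L_I$ component by component. The singularities come from three sources: (i) a line $L_i$ meeting $Q$; (ii) two lines $L_i, L_j$ meeting each other; and (iii) possible coincidences among these points. For source (i), each $L_i$ is a bitangent, so by part (1) of Lemma \ref{lem:gen-quart} it is tangent to $Q$ at exactly two distinct smooth points of $Q$, giving two tacnode-type contact points (ordinary tangency, intersection multiplicity $2$), and no other intersection with $Q$ since $\deg(Q \cap L_i) = 4 = 2+2$ is exhausted by the two tangencies. This local picture is identical for every bitangent. For source (ii), two distinct lines always meet transversally in exactly one point (a node of $L_i + L_j$), and this point lies off $Q$ for a general quartic, since a bitangent meets $Q$ only at its two tangency points and two distinct bitangents share no tangency point.

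Next I would rule out accidental higher coincidences. The key input here is part (2) of Lemma \ref{lem:gen-quart}: no three of the bitangents are concurrent, so every pairwise intersection point $L_i \cap L_j$ lies on exactly two of the lines, never three or more. Combined with the observation that the $L_i \cap L_j$ avoid $Q$ and that the tangency points of distinct bitangents are distinct, this shows that the singular points of $Q+L_I$ fall into exactly two classes, each with a fixed local analytic type and fixed incidence data: the $\binom{r}{2}$ nodes coming from pairs of lines, and the $2r$ tangency points where a single line touches $Q$. Crucially, the number of each type depends only on $r=|I|$, not on which subset $I$ is chosen.

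The final step is purely formal: since for any two index sets $I, I'$ of the same size $r$ there is a bijection $\{L_i\}_{i\in I} \to \{L_j\}_{j\in I'}$ of the line components matching the singularity structure point-for-point and type-for-type (both arrangements having $Q$ as the unique quartic component, $r$ lines, $\binom{r}{2}$ nodes on line-pairs disjoint from $Q$, and $2r$ tangency points), the two arrangements have the same combinatorics by definition. I expect the only subtle point to be pinning down the precise notion of ``same combinatorics'' being used (local type plus incidence lattice) and confirming it is insensitive to the choice of $I$; the geometric content is entirely carried by Lemma \ref{lem:gen-quart}, so no genuine obstacle remains once that enumeration is in place.
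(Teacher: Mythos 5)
Your proposal is correct and follows the same route as the paper: the paper states this proposition without a separate proof, presenting it as a direct consequence of Lemma \ref{lem:gen-quart}, and your argument is exactly the fleshing-out of that deduction (proper bitangency forces two simple tangencies per line and keeps line--line intersections off $Q$, while non-concurrency forces all $\binom{r}{2}$ nodes to be distinct double points). The only difference is that you make explicit the bookkeeping of singularity types and incidences that the paper leaves to the reader.
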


Since Proposition \ref{prop:combinat} assures that all the curves of the form $Q+L_I$  that we are considering have the same combinatorics, Theorem \ref{thm:main1} implies that the remaining thing that we need in order to prove Theorem \ref{thm:main2}  is the existence of curves with different two-graph structures on $I$. For the existence, we provide concrete examples, but not all possible two-graph structures appear. In order to prove the non-appearance of some of the two-graphs in our case, we prepare some more lemmas. 

\begin{lem}\label{lem:uniquetetrad}
Let $I=\{1, \ldots, 28\}$. For each triple $\{L_{i}, L_{j}, L_{k}\}\in \Delta_I$ there is a unique bitangent $L_{l}$ such that every triple of $L_{\{i, j, k, l\}}$ is an element of $\Delta_I$.
\end{lem}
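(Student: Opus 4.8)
The plan is to pass to the switching-class model and reduce everything to combinatorics of duads on an $8$-element set. Recall that the $56$ minimal vectors of $E_7^*$ are the $\pm u_{jk}$ with $1\le j<k\le 8$, so the $28$ bitangents are indexed by the $28$ two-element subsets (duads) $\{j,k\}\subset\{1,\dots,8\}$. First I would record the inner products: a direct computation gives $u_{jk}\cdot u_{lm}=-\tfrac12$ when $\{j,k\}\cap\{l,m\}=\emptyset$ and $u_{jk}\cdot u_{lm}=+\tfrac12$ when the duads meet in exactly one index. Choosing all representatives to be the $u_{jk}$, the switching graph on the full vertex set $I=\{1,\dots,28\}$ therefore has an edge between two bitangents precisely when their duads are disjoint. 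Combining Proposition \ref{prop:StoC} with the correspondence between switching classes and two-graphs, a triple lies in $\Delta_I$ if and only if the number of pairwise-disjoint pairs among its three duads is odd.

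Next I would classify the triples of $\Delta_I$. Three distinct duads have an odd number ($1$ or $3$) of pairwise-disjoint pairs exactly in two cases: either all three are mutually disjoint (occupying six of the eight indices), or they form a path $\{a,b\},\{b,c\},\{c,d\}$ on four indices. For existence, the first type is completed by the leftover duad $\{g,h\}$ on the two unused indices, yielding a perfect matching (syntheme) all four of whose triples are mutually disjoint; the second type is completed by $\{a,d\}$, closing up a $4$-cycle all four of whose triples are paths. In each case one checks directly that the resulting tetrad $L_{\{i,j,k,l\}}$ has every triple in $\Delta_I$, which settles existence.

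For uniqueness I would set up a parity system. Writing $f_p\in\FF_2$ for the indicator that a candidate fourth duad $D_4$ is disjoint from $D_p$ ($p=1,2,3$), the three requirements \lq\lq$\{D_p,D_q,D_4\}\in\Delta_I$\rq\rq\ become three $\FF_2$-linear equations whose coefficient matrix has rank $2$; consistency is guaranteed by $\{D_1,D_2,D_3\}\in\Delta_I$, so there are exactly two admissible parity patterns $(f_1,f_2,f_3)$. The key point is then to show that only one of these two patterns is realized by an actual duad, and that it is realized by a single one. In the mutually-disjoint case the two patterns are \lq\lq$D_4$ disjoint from all three\rq\rq, forcing $D_4=\{g,h\}$, and \lq\lq$D_4$ meets all three\rq\rq, which is impossible since a two-element set cannot meet three pairwise-disjoint duads; in the path case the surviving pattern forces $D_4=\{a,d\}$, while the other forces $D_4$ into the four complementary indices yet still meeting the central duad, again impossible. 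I expect this realizability step — ruling out the phantom parity solution and pinning down the survivor as a unique duad — to be the main (though elementary) obstacle, the rest being bookkeeping in the duad model.
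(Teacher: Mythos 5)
Your argument is correct, but it is genuinely different from the paper's proof. The paper argues geometrically, directly from the definition of $\Delta_I$: given the conic $C$ through the six tangent points of $L_i,L_j,L_k$, the residual two points of $C\cap Q$ lie on a line $L_l$ which is itself a bitangent (by the argument of \cite[Proposition 3.3]{bannai2016}); the same conic then exhibits every triple of the tetrad as an element of $\Delta_I$, and uniqueness of $L_l$ follows from the uniqueness of the conic through the six tangent points. You instead transfer the whole problem to the $E_7^\ast$ duad model via Propositions \ref{prop:CtoC} and \ref{prop:StoC}: your inner-product rule ($-\tfrac12$ for disjoint duads, $+\tfrac12$ for duads meeting in one index) checks out, your classification of the triples of $\Delta_I$ (mutually disjoint triples and paths $\{a,b\},\{b,c\},\{c,d\}$) is exhaustive, and the $\FF_2$ parity system together with the realizability analysis that eliminates the phantom solution is sound in both cases, so existence and uniqueness both go through. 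What the paper's route buys is brevity and minimal hypotheses --- it uses only the conic definition of $\Delta_I$ and makes transparent the classical fact, noted in the remark after the lemma, that a syzygetic triad of odd theta characteristics lies in a unique syzygetic tetrad. What your route buys is an explicit combinatorial description of all syzygetic triples and of the completing fourth bitangent in duad coordinates, which meshes well with the explicit lists of vectors $u_{ij}$ used later in Section \ref{sec:proof}; the price is reliance on the full Mordell--Weil dictionary (Shioda's results and Propositions \ref{prop:CtoC}, \ref{prop:StoC}) rather than on the definition of $\Delta_I$ alone.
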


\begin{proof}
As in the proof of \cite[Proposition 3.3]{bannai2016}, if  there exists a conic $C$ through the six points of tangency of $Q$ and $L_i, L_j, L_k$, the line $L_l$ through the remaining two intersection points of $C$ and $Q$ must be a bitangent line. The existence of the conic implies that the other three triples $\{L_j, L_k, L_l\}$, $\{L_i, L_k, L_l\}$, $\{L_i, L_j, L_l\}$ are also elements of $\Delta_I$. The uniqueness of $L_l$ follows from the uniqueness of a conic passing through six points. 
\end{proof}

\begin{remark}
Note that a bitangent line of $Q$ corresponds to an odd theta characteristic of $Q$. Lemma \ref{lem:uniquetetrad} is equivalent to the fact the a syzygetic triad of theta characteristics is contained in a unique syzygetic tetrad.
\end{remark}

\begin{figure}
\centering
\hfill
\begin{tikzpicture}[baseline=1]
\coordinate (A) at (90:1);
\coordinate (B) at (162:1);
\coordinate (C) at (235:1);
\coordinate (D) at (307:1);
\coordinate (E) at (18:1);
\draw (A) --(B)--(C)--(A);
\draw (A) [fill=white] circle (2pt); 
\draw (B) [fill=white] circle (2pt); 
\draw (C) [fill=white] circle (2pt); 
\draw (D) [fill=white] circle (2pt);
\draw (E) [fill=white] circle (2pt);  
\node at (A) [above] {$v_1$};
\node at (B) [left] {$v_2$};
\node at (C) [below] {$v_3$};
\node at (D) [below] {$v_4$};
\node at (E) [right] {$v_5$};
\end{tikzpicture}
\hfill
\begin{tikzpicture}[baseline=1]
\coordinate (A) at (90:1);
\coordinate (B) at (162:1);
\coordinate (C) at (235:1);
\coordinate (D) at (307:1);
\coordinate (E) at (18:1);
\draw (A) --(B)--(C)--(D)--(E)--(A)--(C)--(E)--(B)--(D)--(A);
\draw (A) [fill=white] circle (2pt); 
\draw (B) [fill=white] circle (2pt); 
\draw (C) [fill=white] circle (2pt); 
\draw (D) [fill=white] circle (2pt);
\draw (E) [fill=white] circle (2pt);  
\node at (A) [above] {$v_1$};
\node at (B) [left] {$v_2$};
\node at (C) [below] {$v_3$};
\node at (D) [below] {$v_4$};
\node at (E) [right] {$v_5$};
\end{tikzpicture} 
\hfill \hspace{0.1pt}
\caption{Representatives of switching classes corresponding to two-graphs with $|\Omega|=5$ and $|\Delta|=7$ (left), $|\Delta|=10$ (right).}
\label{n=5d>6}
\end{figure}
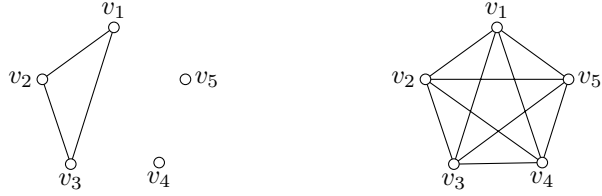

\begin{lem}\label{lem:d_leq_6}
 For any $I\subset \{1,\ldots, 28\}$ with $|I|=5$, its associated two graph $\mcL_I=(\mcL_{I} , \Delta_{I})$ must satisify $|\Delta_{I}|\leq 6$.
\end{lem}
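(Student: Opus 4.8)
The plan is to show directly that $|\Delta_I| \geq 7$ is impossible when $|I| = 5$, combining an elementary counting argument with the uniqueness statement of Lemma \ref{lem:uniquetetrad}. Call a $4$-subset $S \subset \mcL_I$ a \emph{tetrad} if all four of its triples lie in $\Delta_I$. Because $\mcL_I$ is identified with an induced sub-two-graph of the two-graph on all $28$ bitangents, $S$ is a tetrad of $\mcL_I$ exactly when it is a tetrad of $\mcL_{\{1,\ldots,28\}}$. First I would record that, since $(\mcL_I,\Delta_I)$ is a two-graph, each of the five $4$-subsets of $\mcL_I$ contains an even number of triples of $\Delta_I$, hence $0$, $2$, or $4$ of them. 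Writing $n_0, n_2, n_4$ for the number of $4$-subsets of each type, I have $n_0 + n_2 + n_4 = 5$.

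Next I would double-count the incidences $(T,S)$ with $T \in \Delta_I$, $T \subset S$, and $S$ a $4$-subset of $\mcL_I$. Each triple $T \in \Delta_I$ is contained in exactly $|I| - 3 = 2$ of the five $4$-subsets, so counting these incidences in two ways gives $2n_2 + 4n_4 = 2|\Delta_I|$, that is, $n_2 + 2n_4 = |\Delta_I|$. Since $n_2 + n_4 \leq n_0 + n_2 + n_4 = 5$, it follows that $n_4 = |\Delta_I| - (n_2 + n_4) \geq |\Delta_I| - 5$. Thus if $|\Delta_I| \geq 7$ then $n_4 \geq 2$, so $\mcL_I$ contains at least two distinct tetrads.

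Finally I would exploit that two distinct $4$-subsets $S_1 \neq S_2$ of the five-element set $\mcL_I$ necessarily meet in exactly three vertices, so $T := S_1 \cap S_2$ is a common triple. Since $S_1$ is a tetrad, $T \in \Delta_I$, and $T$ extends to the two distinct tetrads $S_1$ and $S_2$. As these are tetrads of the full two-graph, each furnishes a bitangent $L_l$ completing $T$ to a syzygetic tetrad, and the two are distinct; this contradicts the uniqueness of $L_l$ in Lemma \ref{lem:uniquetetrad}. Therefore $|\Delta_I| \leq 6$.

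I expect the only delicate point to be the passage from tetrads of the induced two-graph $\mcL_I$ to tetrads of $\mcL_{\{1,\ldots,28\}}$, which is exactly what licenses the application of Lemma \ref{lem:uniquetetrad}; this is immediate from the identification of $\mcL_I$ as an induced sub-two-graph, but it must be stated explicitly, since the lemma is phrased only for $I = \{1,\ldots,28\}$. Everything else is routine counting. As a consistency check against enumeration, the realizable values turn out to be precisely $|\Delta_I| \in \{0,3,4,5,6\}$, with the excluded values $7$ and $10$ corresponding to the triangle and complete-graph switching classes on five vertices, which is in agreement with the count of five classes in Theorem \ref{thm:main2}.
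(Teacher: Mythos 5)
Your proof is correct, and it takes a genuinely different route from the paper's. The paper first invokes the classification of two-graphs on $5$ vertices to know that the only conceivable values are $|\Delta_I|=0,3,4,5,6,7,10$, and then kills $7$ and $10$ by inspecting the two explicit switching-class representatives (the triangle plus two isolated vertices, and the complete graph $K_5$): in each, $\{v_1,v_2,v_3,v_4\}$ and $\{v_1,v_2,v_3,v_5\}$ are two tetrads through the common triple $\{v_1,v_2,v_3\}$, contradicting Lemma \ref{lem:uniquetetrad}. You reach the same contradiction, but you manufacture the two tetrads abstractly rather than reading them off representatives: the parity axiom gives $n_0+n_2+n_4=5$, the double count gives $n_2+2n_4=|\Delta_I|$, hence $n_4\geq|\Delta_I|-5\geq 2$ whenever $|\Delta_I|\geq 7$, and any two distinct $4$-subsets of a $5$-set share a triple. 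Your counting is right (each triple lies in $|I|-3=2$ of the five $4$-subsets), and you correctly flag and discharge the one delicate point, namely that tetrads of the induced sub-two-graph $\mcL_I$ are tetrads of $\mcL_{\{1,\ldots,28\}}$, which is what licenses applying Lemma \ref{lem:uniquetetrad} as stated. What the two approaches buy: the paper's argument is concrete and reuses the pictures needed anyway for the realizability half of Theorem \ref{thm:main2}, while yours is self-contained — it needs neither the van Lint--Seidel classification nor any case-checking, and it actually proves the stronger combinatorial fact that any two-graph on $5$ vertices in which every triple lies in at most one tetrad has at most $6$ triples (ruling out the non-occurring values $8$ and $9$ for free). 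Both proofs ultimately rest on the same geometric input, Lemma \ref{lem:uniquetetrad}.
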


\begin{proof}
From the classification of two-graphs on 5 elements, the possible values for $|\Delta_{I}|$ are $|\Delta_{I}|=0,3,4,5,6,7,10$. The cases $|\Delta_{I}|=7,10$ can be represented by the switching class in Figure \ref{n=5d>6}.
In  both cases, $\{v_1, v_2, v_3, v_4\}$ and $\{v_1, v_2, v_3, v_5\}$ are tetrads containing $\{v_1, v_2, v_3\}$ such that every triple in them is contained in $\Delta_{I}$. This contradicts the uniqueness of Lemma \ref{lem:uniquetetrad}. Hence $|\Delta_{I}|\not=7, 10$ and we have $|\Delta_{I}|\leq 6$.
\end{proof}

\begin{cor}\label{cor:d_leq_6}
For any $I\subset\{1, \ldots, 28\}$, its associated two graph $\mcL_{I_1}$ cannot have an induced sub-two-graph $(\Omega, \Delta)$ such that $|\Omega|=5$, $|\Delta|> 6$.
\end{cor}

\begin{lem}\label{lem:reduction}
Let $n\geq 4$, $(\Omega, \Delta)$ be a two graph with $\Omega=\{v_1,\ldots, v_n\}$ and let $(\Omega_i, \Delta_i)$ $(i=1,\ldots,n)$ be the induced 
sub-two-graphs of $(\Omega, \Delta)$ obtained by deleting $v_i$ $(i=1,\ldots, n \,\text{resp.})$. Let $|\Delta|=d$ and $|\Delta_i|=d_i$. Then there exists at least one $i$ such that $d_i\geq\frac{n-3}{n}d$.
\end{lem}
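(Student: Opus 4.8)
The plan is to prove this by a double-counting (averaging) argument, which turns out to be the whole story. First I would unwind the definition of the induced sub-two-graph (Definition \ref{def:sub-two-graph}): deleting $v_i$ keeps exactly those triples of $\Delta$ that do not contain $v_i$, so $\Delta_i = \{T \in \Delta : v_i \notin T\}$ and hence $d_i = |\{T \in \Delta : v_i \notin T\}|$. The key combinatorial observation is then that each triple $T \in \Delta$, having exactly three vertices, avoids $v_i$ for precisely $n-3$ of the indices $i$ (namely all $i$ except the three with $v_i \in T$).

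Summing the $d_i$ and interchanging the order of summation, I would obtain
\[
\sum_{i=1}^{n} d_i = \sum_{i=1}^{n} \bigl|\{T \in \Delta : v_i \notin T\}\bigr| = \sum_{T \in \Delta} \bigl|\{i : v_i \notin T\}\bigr| = (n-3)\,d,
\]
since each of the $d$ triples contributes $n-3$ to the total. This identifies the average of the $n$ numbers $d_1,\ldots,d_n$ as exactly $\tfrac{n-3}{n}d$. Because the maximum of a finite collection of real numbers is never below their average, there must exist an index $i$ with
\[
d_i \;\geq\; \frac{1}{n}\sum_{j=1}^{n} d_j \;=\; \frac{n-3}{n}\,d,
\]
which is precisely the claimed bound. (If every $d_i$ were strictly smaller than $\tfrac{n-3}{n}d$, the sum would be strictly less than $(n-3)d$, a contradiction.)

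I do not expect any genuine obstacle here: the argument uses only that $\Delta$ is a collection of $3$-subsets and never invokes the defining evenness axiom of a two-graph (Definition \ref{def:two-graph}), nor the hypothesis $n \geq 4$ beyond ensuring $n-3 \geq 1$ so that the deletions and the constant $\tfrac{n-3}{n}$ behave sensibly. The only step meriting an explicit word is the interchange of summation, which is simply a count of the incidences between vertex-deletions and the triples that survive them.
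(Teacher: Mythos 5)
Your proof is correct and follows exactly the paper's own argument: the double-counting identity $\sum_{i=1}^n d_i = (n-3)d$ (each triple survives the deletion of precisely the $n-3$ vertices it does not contain), followed by the averaging/contradiction step to extract an index $i$ with $d_i \geq \frac{n-3}{n}d$. No gaps; your added remark that the two-graph axiom is never used is a fair observation but not needed.
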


\begin{proof}
Since each triple $\omega\in \Delta$ is contained in $n-3$ of the sub-two-graphs, we have
\[
(n-3)d=\sum_{i=1}^nd_i.
\]
If $d_i<\dfrac{n-3}{n}d$ for all $i$,  then $\displaystyle\sum_{i=1}^nd_i<(n-3)d$ which is a contradiction. Hence, there must be a $d_i$ satisfying $d_i\geq\frac{n-3}{n} d$.
\end{proof}

\begin{cor}\label{cor:d<14}
For any subset $I_1\subset\{1, \ldots, 28\}$ with $|I_1|=6$, $|\Delta_{I_1}|<14$.
\end{cor}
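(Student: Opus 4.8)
The plan is to combine the averaging estimate of Lemma \ref{lem:reduction} with the five-vertex ceiling supplied by Corollary \ref{cor:d_leq_6}. Write $d=|\Delta_{I_1}|$. Since $|I_1|=6$, deleting any single vertex from the two-graph $\mcL_{I_1}$ produces an induced sub-two-graph on $5$ vertices, to which the preceding results apply directly.

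First I would apply Lemma \ref{lem:reduction} with $n=6$. This yields an index $i$ such that the induced sub-two-graph $(\Omega_i,\Delta_i)$ obtained by deleting the $i$-th vertex satisfies
\[
d_i \;=\; |\Delta_i| \;\geq\; \frac{n-3}{n}\,d \;=\; \frac{3}{6}\,d \;=\; \frac{d}{2}.
\]
Next, since $(\Omega_i,\Delta_i)$ is an induced sub-two-graph of $\mcL_{I_1}$ on exactly $5$ vertices, Corollary \ref{cor:d_leq_6} forces $d_i\leq 6$. Chaining the two inequalities gives $\tfrac{d}{2}\leq d_i\leq 6$, hence $d\leq 12$, and in particular $|\Delta_{I_1}|<14$, as claimed.

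There is no genuine obstacle here: every ingredient has already been established, and the argument amounts to feeding $n=6$ into the counting bound of Lemma \ref{lem:reduction} and invoking the five-vertex bound. The only point worth verifying is that the deleted sub-two-graph really is an induced sub-two-graph on $5$ vertices, so that Corollary \ref{cor:d_leq_6} applies; this is immediate from Definition \ref{def:sub-two-graph} together with $|I_1|=6$. It is worth remarking that the method in fact delivers the stronger estimate $d\leq 12$, so the stated bound $<14$ is deliberately loose and leaves room to spare for the subsequent classification in the case $|I|=6$.
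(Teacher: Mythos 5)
Your proof is correct and takes essentially the same route as the paper: both combine Lemma \ref{lem:reduction} with $n=6$ and the five-vertex bound of Corollary \ref{cor:d_leq_6}, the only difference being that the paper argues by contradiction (if $d\geq 14$ then some $d_i\geq 7$) while you argue directly. Your remark that the argument in fact yields the sharper bound $d\leq 12$ is a valid and accurate observation, consistent with the appearance of the class $(6,12)_3$ later in the paper.
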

\begin{proof}
If $|\Delta_{I_1}|=d\geq14$, then by Lemma \ref{lem:reduction}, there must exist a indused sub-two-graph with $|\Delta_i|=d_i\geq 7$ which is impssible by Lemma \ref{cor:d_leq_6}.
\end{proof}

Now we are ready to prove Theorem \ref{thm:main2}.
First, we consider the case where $|I|=5$.
Among the switching classes in Figure \ref{n=5},  we have proved in Lemma \ref{lem:d_leq_6} that the cases $(5,7)$, $(5,10)$ cannot appear. For the other cases, we can find  explicit combinations of minimal vectors $\pm u_{ij}\in E_7^\ast$ giving the desired switching classes, for example:
\begin{align*}
&(5,0):  u_{18}, u_{28}, u_{38}, u_{48}, u_{58} &(5,3):  u_{18}, u_{28}, u_{38}, u_{48}, -u_{15}\\
&(5,4):  u_{18}, u_{28}, u_{38}, u_{48}, -u_{12}&(5,5):  u_{18}, u_{28}, u_{38}, u_{23}, -u_{24}\\
&(5,6):  u_{18}, u_{28}, u_{13}, u_{23}, u_{12}
\end{align*}

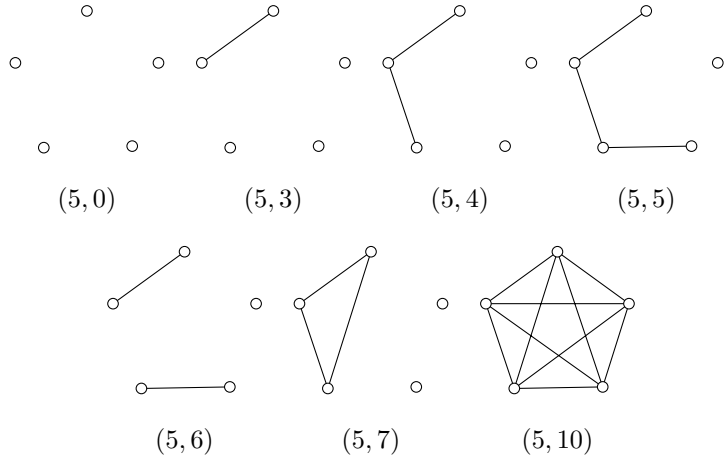
\begin{figure}[h]
\centering
\begin{tabular}{cccc}
\begin{tikzpicture}[baseline=1]
\coordinate (A) at (90:1);
\coordinate (B) at (162:1);
\coordinate (C) at (235:1);
\coordinate (D) at (307:1);
\coordinate (E) at (18:1);
\draw (A) [fill=white] circle (2pt); 
\draw (B) [fill=white] circle (2pt); 
\draw (C) [fill=white] circle (2pt); 
\draw (D) [fill=white] circle (2pt);
\draw (E) [fill=white] circle (2pt);  
\end{tikzpicture}
&
\begin{tikzpicture}[baseline=1]
\coordinate (A) at (90:1);
\coordinate (B) at (162:1);
\coordinate (C) at (235:1);
\coordinate (D) at (307:1);
\coordinate (E) at (18:1);
\draw (A) --(B);
\draw (A) [fill=white] circle (2pt); 
\draw (B) [fill=white] circle (2pt); 
\draw (C) [fill=white] circle (2pt); 
\draw (D) [fill=white] circle (2pt);
\draw (E) [fill=white] circle (2pt);  
\end{tikzpicture}
&
\begin{tikzpicture}[baseline=1]
\coordinate (A) at (90:1);
\coordinate (B) at (162:1);
\coordinate (C) at (235:1);
\coordinate (D) at (307:1);
\coordinate (E) at (18:1);
\draw (A) --(B)--(C);
\draw (A) [fill=white] circle (2pt); 
\draw (B) [fill=white] circle (2pt); 
\draw (C) [fill=white] circle (2pt); 
\draw (D) [fill=white] circle (2pt);
\draw (E) [fill=white] circle (2pt);  
\end{tikzpicture}
&
\begin{tikzpicture}[baseline=1]
\coordinate (A) at (90:1);
\coordinate (B) at (162:1);
\coordinate (C) at (235:1);
\coordinate (D) at (307:1);
\coordinate (E) at (18:1);
\draw (A) --(B)--(C)--(D);
\draw (A) [fill=white] circle (2pt); 
\draw (B) [fill=white] circle (2pt); 
\draw (C) [fill=white] circle (2pt); 
\draw (D) [fill=white] circle (2pt);
\draw (E) [fill=white] circle (2pt);  
\end{tikzpicture} 
\\
&&&\\
$(5,0)$ & $(5,3)$ & $(5,4)$ & $(5,5)$ 
\end{tabular}
\vspace{10pt}

\begin{tabular}{ccc}
\begin{tikzpicture}[baseline=1]
\coordinate (A) at (90:1);
\coordinate (B) at (162:1);
\coordinate (C) at (235:1);
\coordinate (D) at (307:1);
\coordinate (E) at (18:1);
\draw (A) --(B);
\draw (C)--(D);
\draw (A) [fill=white] circle (2pt); 
\draw (B) [fill=white] circle (2pt); 
\draw (C) [fill=white] circle (2pt); 
\draw (D) [fill=white] circle (2pt);
\draw (E) [fill=white] circle (2pt);  
\end{tikzpicture}
&
\begin{tikzpicture}[baseline=1]
\coordinate (A) at (90:1);
\coordinate (B) at (162:1);
\coordinate (C) at (235:1);
\coordinate (D) at (307:1);
\coordinate (E) at (18:1);
\draw (A) --(B)--(C)--(A);
\draw (A) [fill=white] circle (2pt); 
\draw (B) [fill=white] circle (2pt); 
\draw (C) [fill=white] circle (2pt); 
\draw (D) [fill=white] circle (2pt);
\draw (E) [fill=white] circle (2pt);  
\end{tikzpicture}
&
\begin{tikzpicture}[baseline=1]
\coordinate (A) at (90:1);
\coordinate (B) at (162:1);
\coordinate (C) at (235:1);
\coordinate (D) at (307:1);
\coordinate (E) at (18:1);
\draw (A) --(B)--(C)--(D)--(E)--(A)--(C)--(E)--(B)--(D)--(A);
\draw (A) [fill=white] circle (2pt); 
\draw (B) [fill=white] circle (2pt); 
\draw (C) [fill=white] circle (2pt); 
\draw (D) [fill=white] circle (2pt);
\draw (E) [fill=white] circle (2pt);  
\end{tikzpicture}
\\
&&\\
$(5,6)$ & $(5,7)$ & $(5,10)$ 
\end{tabular}
\caption{Representatives of switching classes corresponding to two graphs with $|\Omega|=5$ labeled by $(|\Omega|, |\Delta|)$.} 
\label{n=5}
\end{figure}

Next, for the case $|I|=6$, 
among the switching classes in Figure \ref{n=6}, $(6, 14)$, $(6,16)$, $(6,20)$ cannot appear due to Corollary \ref{cor:d<14}. Also we can easily check that the switching classes $(6,10)_2$, $(6, 10)_3$, $(6,12)_1$, $(6,12)_2$ contain an induced sub switching class equivalent to $(5,7)$, hence these cannot appear either due to Corollary \ref{cor:d_leq_6}. (We could have done the same for $(6, 14)$, $(6,16)$, $(6,20)$.) For the remaining cases $(6,0)$, $(6,4)$, $(6,6)$, $(6,8)_1$, $(6,8)_2$, $(6,8)_3$, $(6,10)_1$, $(6,10)_4$ and $(6,12)_3$ we can find explicit combinations of minimal vectors $\pm u_{ij}\in E_7^\ast$ giving the desired switching classes, for example:
\begin{align*}
& (6,0): u_{18}, u_{28}, u_{38}, u_{48}, u_{58}, u_{68}
&(&6,4): u_{18}, u_{28}, u_{38}, u_{48}, u_{58}, -u_{16}\\
& (6,6): u_{18}, u_{28}, u_{38}, u_{48}, u_{58}, -u_{12}
&(&6, 8)_1: u_{18}, u_{28}, u_{38}, u_{48}, -u_{15}, -u_{26}\\
& (6,8)_2: u_{18}, u_{28}, u_{38}, u_{48}, -u_{15}, -u_{25}
&(&6,8)_3: u_{18} , u_{28}, u_{38}, -u_{14}, u_{23}, u_{48}\\
& (6,10)_1: u_{18}, u_{28}, u_{38}, u_{48}, -u_{14}, -u_{34}
&(&6,10)_4: u_{18}, u_{28}, u_{38}, u_{23}, -u_{24}, u_{35}\\
& (6, 12)_3: u_{18}, u_{28}, u_{38}, u_{12}, u_{13}, u_{23} &
\end{align*}

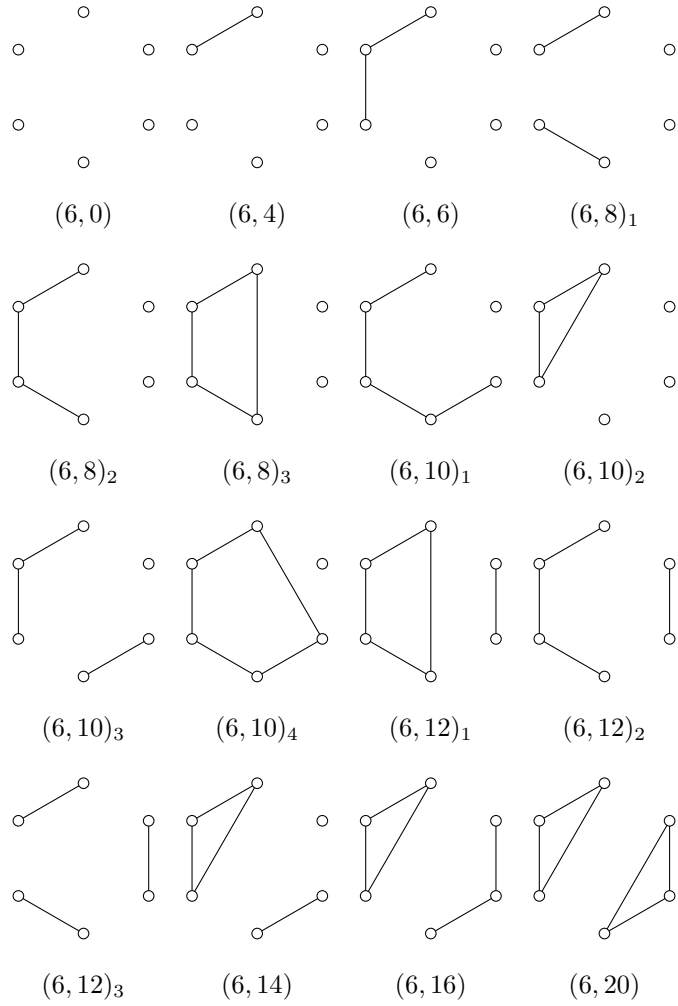
\begin{figure}[h]
\centering
\begin{tabular}{cccc}
\begin{tikzpicture}[baseline=1]
\coordinate (A) at (90:1);
\coordinate (B) at (150:1);
\coordinate (C) at (210:1);
\coordinate (D) at (270:1);
\coordinate (E) at (330:1);
\coordinate (F) at (30:1);
\draw (A) [fill=white] circle (2pt); 
\draw (B) [fill=white] circle (2pt); 
\draw (C) [fill=white] circle (2pt); 
\draw (D) [fill=white] circle (2pt);
\draw (E) [fill=white] circle (2pt);  
\draw (F) [fill=white] circle (2pt);  
\end{tikzpicture}
&
\begin{tikzpicture}[baseline=1]
\coordinate (A) at (90:1);
\coordinate (B) at (150:1);
\coordinate (C) at (210:1);
\coordinate (D) at (270:1);
\coordinate (E) at (330:1);
\coordinate (F) at (30:1);
\draw (A) -- (B);
\draw (A) [fill=white] circle (2pt); 
\draw (B) [fill=white] circle (2pt); 
\draw (C) [fill=white] circle (2pt); 
\draw (D) [fill=white] circle (2pt);
\draw (E) [fill=white] circle (2pt);  
\draw (F) [fill=white] circle (2pt);  
\end{tikzpicture}
&
\begin{tikzpicture}[baseline=1]
\coordinate (A) at (90:1);
\coordinate (B) at (150:1);
\coordinate (C) at (210:1);
\coordinate (D) at (270:1);
\coordinate (E) at (330:1);
\coordinate (F) at (30:1);
\draw (A) -- (B) --(C);
\draw (A) [fill=white] circle (2pt); 
\draw (B) [fill=white] circle (2pt); 
\draw (C) [fill=white] circle (2pt); 
\draw (D) [fill=white] circle (2pt);
\draw (E) [fill=white] circle (2pt);  
\draw (F) [fill=white] circle (2pt);  
\end{tikzpicture}
&
\begin{tikzpicture}[baseline=1]
\coordinate (A) at (90:1);
\coordinate (B) at (150:1);
\coordinate (C) at (210:1);
\coordinate (D) at (270:1);
\coordinate (E) at (330:1);
\coordinate (F) at (30:1);
\draw (A) -- (B);
\draw (C) --(D);
\draw (A) [fill=white] circle (2pt); 
\draw (B) [fill=white] circle (2pt); 
\draw (C) [fill=white] circle (2pt); 
\draw (D) [fill=white] circle (2pt);
\draw (E) [fill=white] circle (2pt);  
\draw (F) [fill=white] circle (2pt);  
\end{tikzpicture}
\\
&&&\\
$(6,0)$ & $(6,4)$ & $(6,6)$ & $(6,8)_1$ \\
&&&\\
\begin{tikzpicture}[baseline=1]
\coordinate (A) at (90:1);
\coordinate (B) at (150:1);
\coordinate (C) at (210:1);
\coordinate (D) at (270:1);
\coordinate (E) at (330:1);
\coordinate (F) at (30:1);
\draw (A) -- (B)--(C)--(D);
\draw (A) [fill=white] circle (2pt); 
\draw (B) [fill=white] circle (2pt); 
\draw (C) [fill=white] circle (2pt); 
\draw (D) [fill=white] circle (2pt);
\draw (E) [fill=white] circle (2pt);  
\draw (F) [fill=white] circle (2pt);  
\end{tikzpicture}
&
\begin{tikzpicture}[baseline=1]
\coordinate (A) at (90:1);
\coordinate (B) at (150:1);
\coordinate (C) at (210:1);
\coordinate (D) at (270:1);
\coordinate (E) at (330:1);
\coordinate (F) at (30:1);
\draw (A) -- (B)--(C)--(D)--(A);
\draw (A) [fill=white] circle (2pt); 
\draw (B) [fill=white] circle (2pt); 
\draw (C) [fill=white] circle (2pt); 
\draw (D) [fill=white] circle (2pt);
\draw (E) [fill=white] circle (2pt);  
\draw (F) [fill=white] circle (2pt);  
\end{tikzpicture}
&
\begin{tikzpicture}[baseline=1]
\coordinate (A) at (90:1);
\coordinate (B) at (150:1);
\coordinate (C) at (210:1);
\coordinate (D) at (270:1);
\coordinate (E) at (330:1);
\coordinate (F) at (30:1);
\draw (A) -- (B)--(C)--(D)--(E);
\draw (A) [fill=white] circle (2pt); 
\draw (B) [fill=white] circle (2pt); 
\draw (C) [fill=white] circle (2pt); 
\draw (D) [fill=white] circle (2pt);
\draw (E) [fill=white] circle (2pt);  
\draw (F) [fill=white] circle (2pt);  
\end{tikzpicture}
&
\begin{tikzpicture}[baseline=1]
\coordinate (A) at (90:1);
\coordinate (B) at (150:1);
\coordinate (C) at (210:1);
\coordinate (D) at (270:1);
\coordinate (E) at (330:1);
\coordinate (F) at (30:1);
\draw (A) -- (B)--(C)--(A);
\draw (A) [fill=white] circle (2pt); 
\draw (B) [fill=white] circle (2pt); 
\draw (C) [fill=white] circle (2pt); 
\draw (D) [fill=white] circle (2pt);
\draw (E) [fill=white] circle (2pt);  
\draw (F) [fill=white] circle (2pt);  
\end{tikzpicture}
\\
&&&\\
 $(6,8)_2$ & $(6,8)_3$ & $(6,10)_1$ & $(6,10)_2$ \\
&&&\\ 
\begin{tikzpicture}[baseline=1]
\coordinate (A) at (90:1);
\coordinate (B) at (150:1);
\coordinate (C) at (210:1);
\coordinate (D) at (270:1);
\coordinate (E) at (330:1);
\coordinate (F) at (30:1);
\draw (A) --(B)--(C);
\draw (D)--(E);
\draw (A) [fill=white] circle (2pt); 
\draw (B) [fill=white] circle (2pt); 
\draw (C) [fill=white] circle (2pt); 
\draw (D) [fill=white] circle (2pt);
\draw (E) [fill=white] circle (2pt);  
\draw (F) [fill=white] circle (2pt);  
\end{tikzpicture}
&
\begin{tikzpicture}[baseline=1]
\coordinate (A) at (90:1);
\coordinate (B) at (150:1);
\coordinate (C) at (210:1);
\coordinate (D) at (270:1);
\coordinate (E) at (330:1);
\coordinate (F) at (30:1);
\draw (A) --(B)--(C)--(D)--(E)--(A);
\draw (A) [fill=white] circle (2pt); 
\draw (B) [fill=white] circle (2pt); 
\draw (C) [fill=white] circle (2pt); 
\draw (D) [fill=white] circle (2pt);
\draw (E) [fill=white] circle (2pt);  
\draw (F) [fill=white] circle (2pt);  
\end{tikzpicture}
&
\begin{tikzpicture}[baseline=1]
\coordinate (A) at (90:1);
\coordinate (B) at (150:1);
\coordinate (C) at (210:1);
\coordinate (D) at (270:1);
\coordinate (E) at (330:1);
\coordinate (F) at (30:1);
\draw (A) -- (B)--(C)--(D)--(A);
\draw (E)--(F);
\draw (A) [fill=white] circle (2pt); 
\draw (B) [fill=white] circle (2pt); 
\draw (C) [fill=white] circle (2pt); 
\draw (D) [fill=white] circle (2pt);
\draw (E) [fill=white] circle (2pt);  
\draw (F) [fill=white] circle (2pt);  
\end{tikzpicture}
&
\begin{tikzpicture}[baseline=1]
\coordinate (A) at (90:1);
\coordinate (B) at (150:1);
\coordinate (C) at (210:1);
\coordinate (D) at (270:1);
\coordinate (E) at (330:1);
\coordinate (F) at (30:1);
\draw (A) -- (B)--(C)--(D);
\draw (E)--(F);
\draw (A) [fill=white] circle (2pt); 
\draw (B) [fill=white] circle (2pt); 
\draw (C) [fill=white] circle (2pt); 
\draw (D) [fill=white] circle (2pt);
\draw (E) [fill=white] circle (2pt);  
\draw (F) [fill=white] circle (2pt);  
\end{tikzpicture}
\\
&&&\\
 $(6,10)_3$ & $(6,10)_4$ & $(6,12)_1$ & $(6,12)_2$\\
 &&&\\
\begin{tikzpicture}[baseline=1]
\coordinate (A) at (90:1);
\coordinate (B) at (150:1);
\coordinate (C) at (210:1);
\coordinate (D) at (270:1);
\coordinate (E) at (330:1);
\coordinate (F) at (30:1);
\draw (A) -- (B);
\draw (C)--(D);
\draw (E)--(F);
\draw (A) [fill=white] circle (2pt); 
\draw (B) [fill=white] circle (2pt); 
\draw (C) [fill=white] circle (2pt); 
\draw (D) [fill=white] circle (2pt);
\draw (E) [fill=white] circle (2pt);  
\draw (F) [fill=white] circle (2pt);  
\end{tikzpicture}
&
\begin{tikzpicture}[baseline=1]
\coordinate (A) at (90:1);
\coordinate (B) at (150:1);
\coordinate (C) at (210:1);
\coordinate (D) at (270:1);
\coordinate (E) at (330:1);
\coordinate (F) at (30:1);
\draw (A) -- (B)--(C)--(A);
\draw (D)--(E);
\draw (A) [fill=white] circle (2pt); 
\draw (B) [fill=white] circle (2pt); 
\draw (C) [fill=white] circle (2pt); 
\draw (D) [fill=white] circle (2pt);
\draw (E) [fill=white] circle (2pt);  
\draw (F) [fill=white] circle (2pt);  
\end{tikzpicture}
&
\begin{tikzpicture}[baseline=1]
\coordinate (A) at (90:1);
\coordinate (B) at (150:1);
\coordinate (C) at (210:1);
\coordinate (D) at (270:1);
\coordinate (E) at (330:1);
\coordinate (F) at (30:1);
\draw (A) -- (B)--(C)--(A);
\draw (D)--(E)--(F);
\draw (A) [fill=white] circle (2pt); 
\draw (B) [fill=white] circle (2pt); 
\draw (C) [fill=white] circle (2pt); 
\draw (D) [fill=white] circle (2pt);
\draw (E) [fill=white] circle (2pt);  
\draw (F) [fill=white] circle (2pt);  
\end{tikzpicture}
&
\begin{tikzpicture}[baseline=1]
\coordinate (A) at (90:1);
\coordinate (B) at (150:1);
\coordinate (C) at (210:1);
\coordinate (D) at (270:1);
\coordinate (E) at (330:1);
\coordinate (F) at (30:1);
\draw (A) -- (B)--(C)--(A);
\draw (D)--(E)--(F)--(D);
\draw (A) [fill=white] circle (2pt); 
\draw (B) [fill=white] circle (2pt); 
\draw (C) [fill=white] circle (2pt); 
\draw (D) [fill=white] circle (2pt);
\draw (E) [fill=white] circle (2pt);  
\draw (F) [fill=white] circle (2pt);  
\end{tikzpicture}\\
&&&\\
$(6,12)_3$ & $(6,14)$ & $(6, 16)$ & $(6,20)$
\end{tabular}
\caption{Representatives of switching classes corresponding to two graphs with $|\Omega|=6$ labeled by $(|\Omega|, |\Delta|)$.} 
\label{n=6}
\end{figure}

\noindent Shinzo BANNAI\\
National Institute of Technology, Ibaraki College, 866 Nakane, Hitachinaka-shi, Ibaraki-Ken 312-8508 JAPAN, 
{\tt sbannai@ge.ibaraki-ct.ac.jp}\\

\noindent Momoko YAMAMOTO-OHNO\\
Department of Mathematics Sciences\\
Tokyo Metropolitan University, 1-1 Minami-Ohsawa, Hachiohji 192-0397 JAPAN,\\ 
{\tt yamamoto-momoko@ed.tmu.ac.jp}
%
%


\begin{thebibliography}{999}

\bibitem{artal} E.~Artal~Bartolo, \emph{Sur les couples des {Zariski}},
  J.\ Algebraic Geometry, {\bf 3} (1994) no. {\bf 2}, 223--247


\bibitem{act} E.~Artal~Bartolo, J.-I.~Cogolludo and H.~Tokunaga:
   \emph{A survey on Zariski pairs}, Adv. Stud. Pure Math., \textbf{50} (2008), 1-100.

\bibitem{artal-tokunaga} E.~Artal~Bartolo and H.~Tokunaga: \emph{Zariski $k$-plets of rational curve arrangements and dihedral covers.}, Topology Appl. {\bf 142} (2004), no.1-3, 227-233.


\bibitem{bannai2016} S. Bannai, \emph{A note on splitting curves of plane quartics and multi-sections of rational elliptic surfaces},  Topology and its Applications {\bf 202} (2016), 423--439

\bibitem{bgst} S.~Bannai, B.Guerville-Ball\'{e}, T.~Shirane and H.~Tokunaga:
  \emph{On the topology of arrangements of a cubic and its inflectional tangents}, Proc. Japan Acad. {\bf 93} (2017),  50-53.

 \bibitem{bannai-tokunaga} S.~Bannai and H.~Tokunaga:  \emph{Geometry of  bisections of elliptic surfaces  and 
 Zariski $N$-plets for conic arrangements}, Geom.  Dedicata
 {\bf  178} (2015),  219-237,  DOI 10.1007/s10711-015-0054-z. 

\bibitem{ban-yam-tok2018} S. Bannai, M. Yamamoto and H. Tokunaga, \emph{A note on the topology of arrangements for a smooth plane quartic and its bitangent lines}, to appear in Hiroshima Math. J.

\bibitem{dolgachev} I.V. Dolgachev, Topics in Classical Algebraic Geometry, Cambridge University Press, 2012 


\bibitem{mallows-sloane} C.L. Mallows and N.J.A. Sloane, \emph{Two-graphs, switching classes and Euler graphs are equal in number}, SIAM J. Appl. Math. Vol. 28, No.4, June 1975


\bibitem{seidel1973} J.J. Seidel, \emph{A survey of two-graphs}, Proc. Int. Coll. Theorie Combinatorie, Acc. Naz. Lincei, Rome, 1973

\bibitem{shioda90} T.~Shioda: \emph{On the Mordell-Weil lattices}, \rm Comment. Math. Univ. St. Pauli
\textbf{39} (1990), 211-240. 

\bibitem{shioda93} T.~Shioda: \emph{Plane Quartics and Mordell-Weil Lattices of Type $E_7^\ast$},
\rm Comment. Math. Univ. St. Pauli
\textbf{42} (1993), 61--79.  

\bibitem{shirane16} T.~Shirane: \emph{A note on splitting numbers for
Galois covers and $\pi_1$-equivalent Zariski $k$-plets}, Proc. Amer. Math. Soc. 145 (2017), no. 3, 1009--1017.
%
\bibitem{shirane17} T.~Shirane: \emph{Connected numbers and the embedded topology of plane curves}, 
Canad. Math. Bull. {\bf 61} (2018), no. 3, 650--658. 

\bibitem{tokunaga14} H.~Tokunaga: \emph{
 Sections of elliptic surfaces 
and
 Zariski pairs for conic-line arrangements via dihedral covers
 }, J. Math. Soc. Japan {\bf 66} (2014), 613-640.


\bibitem{vanlint-seidel} J. H. van Lint and J. J. Seidel, \emph{Equilateral point sets in elliptic geometry}, Proc. Kon. Nederl. Akad. Wet.(1966), Ser A, 69, 335--348.  

\end{thebibliography}
\end{document}